\documentclass[11pt]{article}
\usepackage[T1]{fontenc}
\usepackage[utf8]{inputenc}
\usepackage{lmodern}
\usepackage[margin=1in]{geometry}
\usepackage{amsmath,amssymb,amsthm}
\usepackage{booktabs,array}
\usepackage{microtype}
\usepackage[hidelinks]{hyperref}
\hypersetup{pdftitle={Nullstellensatz degree under Hajos joins and vertex identifications},
 pdfsubject={Graph coloring and algebraic proof complexity},
 pdfcreator={},pdfproducer={}}
\ifdefined\pdfinfoomitdate\pdfinfoomitdate=1\fi
\ifdefined\pdfsuppressptexinfo\pdfsuppressptexinfo=15\fi
\ifdefined\pdftrailerid\pdftrailerid{}\fi
\newtheorem{theorem}{Theorem}[section]
\newtheorem{lemma}[theorem]{Lemma}
\newtheorem{proposition}[theorem]{Proposition}
\newtheorem{corollary}[theorem]{Corollary}
\theoremstyle{remark}
\newtheorem{remark}[theorem]{Remark}
\newcommand{\F}{\mathbb F}

\newcommand{\N}{N_{k,\F}}
\newcommand{\HJ}{Haj\'os}

\newcommand{\tw}{\operatorname{tw}}

\title{Nullstellensatz degree under \HJ\ joins\\
and vertex identifications}
\author{Ying Xie\\Kennesaw State University\\\texttt{yxie2@kennesaw.edu}}
\hypersetup{pdfauthor={Ying Xie}}

\date{September 13, 2026}

\begin{document}
\maketitle
\begin{abstract}
We study the minimum coefficient degree $N_{k,\F}(G)$ of a Nullstellensatz
certificate for Bayer's $k$-coloring equations, where the characteristic
of $\F$ does not divide $k$. If $J$ is a \HJ\ join of non-$k$-colorable
graphs $G,H$ and $m=\max\{N_{k,\F}(G),N_{k,\F}(H)\}$, then
$N_{k,\F}(J)\leq m+k$. When deletion of the selected edge makes each
input $k$-colorable, we also have $N_{k,\F}(J)\geq m$; the degree
congruence then gives $N_{k,\F}(J)\in\{m,m+k\}$. This partially answers
a question of Li, Lowenstein, and Omar. For three-coloring over $\F_2$,
we construct an infinite $4$-critical family of exact degree seven,
attaining the bound at input degree four. In contrast, every graph
constructed from $K_4$ solely by \HJ\ joins has degree $O(\log n)$
and a certificate with polynomially many terms: joins preserve
treewidth at most three, and balanced separators yield low-degree
certificates. Additional vertex identifications are excluded from
this obstruction. We classify all single identifications of the
$25$-vertex base graph; exactly $36$ preserve degree seven, producing
$24$-vertex $4$-critical graphs of treewidth four. A compressed self-join
at adjacent true twins prevents degree loss and gives a repeatable
rule adding four vertices per round. The rule does not establish
degree amplification or preservation of criticality. Exact witnesses
and standalone verification programs accompany the finite results.
\end{abstract}

\section{Introduction}

Bayer's encoding expresses graph colorability as a system of polynomial
equations. For a non-colorable graph, a Nullstellensatz certificate is a
polynomial identity proving that these equations have no common zero.
At a fixed coefficient degree, finding a certificate reduces to solving
a linear system; see De Loera et al.~\cite{DLMRS} and
Romero and Tun\c{c}el~\cite{RT}.

Li, Lowenstein, and Omar~\cite[Problem~3.5]{LLO} ask how the minimum
Nullstellensatz degree of a graph obtained by a \HJ\ join depends on
the degrees of the two input graphs. The question appears for general
$k$ in~\cite[Problem~6.3]{RT}. We prove that, for critical inputs, the
output degree is either $m$ or $m+k$, where $m$ is the larger input degree.

The upper bound comes from a three-variable identity that combines the
input certificates. For the lower bound, a coloring of one input with
its selected edge removed gives a substitution by scalar multiples of
a single variable. This substitution preserves degree; descent from a
splitting field gives the comparison over the original field.

Section~\ref{sec:degree7} gives an explicit family attaining the bound
with both input degrees equal to four. The inputs are closed diamond
chains; their joins have exact degree seven over $\F_2$. The lower bound
combines a checked dual certificate for a $25$-vertex graph with a
substitution that folds longer chains onto the base construction.

Section~\ref{sec:joins-only} proves a complementary obstruction:
arbitrary joins from bounded-treewidth inputs retain bounded treewidth,
and a separator argument gives logarithmic certificate degree.
The result covers unbalanced join spines as well as balanced trees.
Section~\ref{sec:identifications} then studies degree-preserving
identifications of the base graph and a compressed self-join. These
operations control graph order without proving repeated degree rises.

The encoding and degree congruence are standard, and the small examples
in Section~\ref{sec:examples} belong to the classification in~\cite{LLO}.
Lauria and Nordstr\"om~\cite{LN} give explicit graph families requiring
linear degree, with exponential-size lower bounds for Boolean
polynomial calculus. Their degree lower bounds also apply to Bayer's
encoding. Our degree-seven family and its identified quotients have
fixed degree. The asymptotic question here is whether specified joins
and identifications can construct linear-degree graphs while their
order grows linearly; no such family is obtained in this paper.

\section{Encoding and degree}
\label{sec:prelim}

All graphs are finite, simple, and undirected. Fix $k\geq2$ and a field
$\F$ with $k\cdot1_{\F}\ne0$. For a graph $G=(V,E)$, set
\begin{equation}\label{eq:encoding}
 p_v=x_v^k-1,\qquad
 q_{uv}=q(x_u,x_v),\qquad
 q(s,t)=\sum_{j=0}^{k-1}s^{k-1-j}t^j.
\end{equation}
The common zeros over an algebraic closure of $\F$ are exactly the proper
$k$-colorings with colors chosen among the $k$th roots of unity. Indeed,
$q(s,t)=0$ for distinct $k$th roots, whereas $q(s,s)=ks^{k-1}\ne0$.

For non-$k$-colorable $G$, let $\N(G)$ be the least integer $d$ for which
there is an identity in the ordinary polynomial ring
\begin{equation}\label{eq:ordinary}
 1=\sum_{v\in V}r_vp_v+\sum_{uv\in E}h_{uv}q_{uv},
 \qquad \deg r_v,\deg h_{uv}\leq d.
\end{equation}
This is \emph{coefficient degree}, as in~\cite{LLO,RT}, rather than the
maximum degree of a product $r_vp_v$ or $h_{uv}q_{uv}$.
We write $N(G)$ when $k$ and $\F$ are fixed.

Let
\[
 R_V=\F[x_v:v\in V]/\langle x_v^k-1:v\in V\rangle.
\]
Every element has a unique representative in which each exponent is
less than $k$. Its degree means the total degree of that representative.
Reduction to this form never increases total degree. The ring $R_V$
uses only the vertex relations; edge polynomials are not set to zero.

\begin{lemma}[Quotient degree]\label{lem:quotient}
The minimum in~\eqref{eq:ordinary} equals the minimum degree of edge
multipliers in an identity
\begin{equation}\label{eq:quotient}
 1=\sum_{uv\in E}h_{uv}q_{uv}\qquad\text{in }R_V.
\end{equation}
If the edge multipliers in~\eqref{eq:quotient} have degree at most $d$,
the corresponding ordinary identity can be chosen with vertex multipliers
of degree at most $d-1$.
\end{lemma}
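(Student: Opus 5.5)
We prove two inequalities between the minima, and the second direction will also give the claim about vertex multipliers. Write $d_{\mathrm{ord}}$ for the minimum in~\eqref{eq:ordinary} and $d_{\mathrm{quo}}$ for the minimum edge-multiplier degree in~\eqref{eq:quotient}.

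\emph{From ordinary to quotient ($d_{\mathrm{quo}}\le d_{\mathrm{ord}}$).} Take an identity~\eqref{eq:ordinary} with all multipliers of degree at most $d$ and map it to $R_V$. Each $p_v$ becomes $0$, so $1=\sum \bar h_{uv}q_{uv}$ in $R_V$, where $\bar h_{uv}$ is the reduced representative of $h_{uv}$. Since reduction never increases total degree, $\deg \bar h_{uv}\le d$.

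\emph{From quotient to ordinary ($d_{\mathrm{ord}}\le d_{\mathrm{quo}}$, with vertex multipliers of degree $\le d-1$).} Start from an identity~\eqref{eq:quotient} and take the $h_{uv}$ to be reduced representatives, so $\deg h_{uv}\le d$ and every exponent in $h_{uv}$ is below $k$. Set $f=1-\sum h_{uv}q_{uv}$ in the ordinary ring. Then $f$ lies in the ideal $\langle x_v^k-1\rangle$, and $\deg f\le d+k-1$.

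Now reduce $f$ by the division algorithm against the $p_v$. Each step replaces a monomial $x_v^{a}m$ with $a\ge k$ by $x_v^{a-k}m$ and records the quotient term $x_v^{a-k}m$ in $r_v$. Because the family $\{x_v^k-1\}$ is a Gröbner basis for any degree-compatible order (the leading terms are pairwise coprime), the remainder is the unique reduced form of $f$, which is $0$. Hence $f=\sum r_v p_v$, and every term added to $r_v$ has degree at most $\deg f-k\le d-1$. This gives the ordinary identity $1=\sum r_vp_v+\sum h_{uv}q_{uv}$ with $\deg h_{uv}\le d$ and $\deg r_v\le d-1$, as claimed. The edge case $d=0$ needs no separate treatment: then $\deg f\le k-1$, so no monomial of $f$ has an exponent $\ge k$, and $f$ is already reduced, hence $f=0$ and all $r_v=0$.

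The main obstacle is this degree bound on the $r_v$. It needs two facts: division by the $p_v$ drops degree by exactly $k$ at each step, and the final remainder is zero. The first holds because $p_v$ has leading term $x_v^k$ in any degree order. The second holds because the reduced form of an element of the ideal is unique, which is the uniqueness of representatives in $R_V$ already stated before the lemma.
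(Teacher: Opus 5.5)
Your proof is correct and follows the paper's argument. In one direction you reduce an ordinary certificate; in the other you telescope the monomial reductions of $\sum h_{uv}q_{uv}$ (degree at most $d+k-1$) against the $x_v^k-1$, so each recorded quotient term has degree at most $d-1$. The only difference is at $d=0$: the paper notes that no such certificate exists, since the $q_{uv}$ are already reduced and have zero constant term, while your argument covers that case vacuously, which is also valid.
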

\begin{proof}
Reducing~\eqref{eq:ordinary} gives~\eqref{eq:quotient} without increasing
edge-multiplier degrees. Conversely, take reduced multipliers in
\eqref{eq:quotient}. The ordinary polynomial
$S=\sum h_{uv}q_{uv}$ has degree at most $d+k-1$ and reduces to $1$.
Each monomial reduction $M\mapsto M/x_v^k$ contributes
$(M/x_v^k)(x_v^k-1)$, whose multiplier has degree at most $d-1$.
Telescoping these reductions and changing signs gives
\eqref{eq:ordinary}. A degree-zero quotient certificate cannot exist:
constant multiples of the $q_{uv}$ have degree $k-1$, undergo no
reductions, and have zero constant coefficient. Thus the argument only
needs $d\geq1$.
\end{proof}

\begin{lemma}[Degree congruence, {\cite[Theorem~2.1]{DLMRS}}]
\label{lem:congruence}
For every non-$k$-colorable graph $G$, $N(G)\equiv1\pmod k$.
\end{lemma}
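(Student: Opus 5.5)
We prove the congruence with a grading argument: we make $R_V$ graded by $\mathbb Z/k$ and show that any certificate can be projected onto graded components. By Lemma~\ref{lem:quotient} it suffices to work with quotient certificates~\eqref{eq:quotient} in $R_V$. The relations $x_v^k-1$ are homogeneous for total degree modulo $k$, so $R_V=\bigoplus_{i\in\mathbb Z/k}R_V^{(i)}$. Here $R_V^{(i)}$ is spanned by the reduced monomials whose total degree is $\equiv i\pmod k$. Each $q_{uv}$ is homogeneous of degree $k-1\equiv -1$. The constant $1$ lies in $R_V^{(0)}$.

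Take a certificate $1=\sum h_{uv}q_{uv}$ with reduced multipliers of degree at most $d=\N(G)$. Write $h_{uv}^{(i)}$ for the part of $h_{uv}$ consisting of the reduced monomials of degree $\equiv i\pmod k$. Projecting the identity onto $R_V^{(0)}$ gives $1=\sum h_{uv}^{(1)}q_{uv}$. Since $h_{uv}^{(1)}$ consists of monomials of $h_{uv}$, its degree is at most $d$. So we may assume that every monomial in every multiplier has degree $\equiv1\pmod k$.

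Now suppose $d\not\equiv1\pmod k$. Then no multiplier has a monomial of degree exactly $d$, so all multipliers have degree at most $d-1$. This contradicts minimality. The degree-$0$ case is already excluded in the proof of Lemma~\ref{lem:quotient}, and it is consistent anyway, since $0\not\equiv 1$ for $k\ge2$. Finally, Lemma~\ref{lem:quotient} converts the quotient degree back to the ordinary degree, because the two minima are equal. Hence $\N(G)\equiv1\pmod k$.

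The only point needing care is that reduction modulo $x_v^k-1$ respects the $\mathbb Z/k$-grading. This holds because each reduction step $M\mapsto M/x_v^k$ lowers the degree by $k$, so the direct-sum decomposition of $R_V$ is well defined and projection commutes with multiplication by the homogeneous elements $q_{uv}$. The hypothesis $k\cdot1_\F\neq0$ is not needed for this argument; it only ensures that the system encodes colorings.
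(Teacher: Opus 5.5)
Your proof is correct and follows the paper's argument. Both project a minimal quotient certificate onto the grade-zero component of the $\mathbb{Z}/k$-grading of $R_V$, keeping only the multiplier monomials of degree $\equiv 1 \pmod k$; the paper then takes $d'$ to be the largest surviving degree and uses minimality to get $d=d'$, while you phrase the same step as a contradiction.
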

\begin{proof}
Put $d=N(G)$ and choose a quotient certificate of coefficient degree $d$,
using Lemma~\ref{lem:quotient}. For each multiplier $h_{uv}$, let
$h_{uv}^{[1]}$ be the sum of its reduced monomials whose total degrees
are congruent to $1$ modulo $k$.

The reduced monomial basis gives $R_V$ a direct-sum grading by total
degree modulo $k$: the relations $x_v^k=1$ preserve this grading.
Since each $q_{uv}$ has grade $k-1$, the grade-zero component of the
certificate is
\[
 1=\sum_{uv\in E}h_{uv}^{[1]}q_{uv}.
\]
At least one $h_{uv}^{[1]}$ is nonzero. Let $d'$ be the maximum degree
among these nonzero multipliers. Then $d'\leq d$ and $d'\equiv1\pmod k$.
The displayed identity is a certificate of degree $d'$, so minimality
of $d$ and Lemma~\ref{lem:quotient} give $d\leq d'$. Hence $d=d'$.
\end{proof}

\begin{lemma}[Extension of coefficients]\label{lem:extension}
For every field extension $L/\F$ and every degree bound $d$, a
degree-at-most-$d$ certificate exists over $L$ if and only if one exists
over $\F$.
\end{lemma}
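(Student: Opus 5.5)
The plan is to reduce the question to the solvability of a finite linear system whose coefficients lie in $\F$, and then use the fact that solvability of such a system does not depend on the field in which we look for solutions. The direction ``over $\F$ implies over $L$'' is immediate: a certificate over $\F$ is literally an identity in $\F[x]\subseteq L[x]$, with the same degrees. So all the work is in the converse.

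For the converse, fix the degree bound $d$. I would treat the unknown coefficients of the multipliers as the variables of a linear system. Concretely, write each $r_v$ and $h_{uv}$ in~\eqref{eq:ordinary} as a general polynomial of degree at most $d$, that is, a linear combination of the finitely many monomials of degree $\le d$ with unknown coefficients. Expanding the right-hand side of~\eqref{eq:ordinary} then gives a polynomial whose coefficients are linear forms in those unknowns. The coefficients of these linear forms come from the coefficients of $p_v$ and $q_{uv}$, which are $0$ and $\pm1$, so they lie in the prime field and in particular in $\F$. Comparing coefficients monomial by monomial with the constant polynomial $1$ produces a finite system $Ay=b$ with $A$ and $b$ having entries in $\F$. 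A degree-at-most-$d$ certificate over a field $K\supseteq\F$ is then exactly a solution $y\in K^n$ of this system. (One could equally work with the quotient form~\eqref{eq:quotient} via Lemma~\ref{lem:quotient}, but the ordinary form avoids any discussion of reduced representatives.)

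The key step is the standard fact that a linear system with coefficients in $\F$ that is solvable over $L$ is already solvable over $\F$. I would cite the Rouch\'e--Capelli criterion: the system is solvable if and only if $\operatorname{rank}A=\operatorname{rank}[A\mid b]$. Rank is determined by whether certain minors vanish, and those minors are elements of $\F$ whose vanishing does not change when we pass to $L$, so both ranks are the same over $\F$ and over $L$. An alternative is to do Gaussian elimination over $\F$: it either produces an inconsistency that is also valid over $L$, or it produces an explicit solution in $\F$.

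I do not expect any real obstacle here. The one point that needs care is checking that the correspondence between certificates and solutions of the system is exact. In particular, the degree bounds must be imposed by restricting the supports of the multipliers, not by adding extra equations, so that the same finite system describes certificates over every field containing $\F$.
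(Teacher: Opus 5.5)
Your proposal is correct and follows the paper's main argument: the certificate conditions form a finite linear system with coefficients in $\F$, and its consistency does not change when the field is extended. The paper also notes an alternative, namely applying an $\F$-linear map $L\to\F$ that fixes $1$ coefficientwise to an $L$-certificate; this has the side benefit, used later in the separator theorem, that it does not enlarge monomial support.
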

\begin{proof}
The conditions on the coefficients are a finite linear system over $\F$.
Its consistency is unchanged by extending the field. Alternatively,
apply an $\F$-linear map $L\to\F$ fixing $1$ coefficientwise to an
identity over $L$. The constraint polynomials have coefficients in
$\F$, so the resulting identity remains valid and has the same degree
bound.
\end{proof}

\section{The join bound}

Take vertex-disjoint graphs $G,H$ and oriented edges $(a,b)$ of $G$
and $(x,y)$ of $H$. Their \HJ\ join identifies $a$ with $x$, deletes
the selected edges, and adds the edge between $b$ and $y$. After the
identification, write $c$ for the image of $y$. Thus the deleted edges
are $ab,ac$ and the new edge is $bc$. The notation $J$ always includes
the choice of these oriented edges.

\begin{theorem}\label{thm:main}
Let $J$ be a \HJ\ join of non-$k$-colorable graphs $G,H$. Set
\[
 m=\max\{N(G),N(H)\}.
\]
Then $J$ is non-$k$-colorable and
\begin{equation}\label{eq:upper}
 N(J)\leq m+k.
\end{equation}
If $H-xy$ is $k$-colorable, then $N(J)\geq N(G)$. If $G-ab$ is
$k$-colorable, then $N(J)\geq N(H)$. In particular, if both deletions
are $k$-colorable, then
\begin{equation}\label{eq:twovalues}
 m\leq N(J)\leq m+k,\qquad N(J)\in\{m,m+k\}.
\end{equation}
\end{theorem}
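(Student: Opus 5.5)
The plan has three parts: non-colorability, the lower bounds, and the upper bound. I expect the upper bound to be the main difficulty.

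\emph{Non-colorability.} This is the classical \HJ\ argument. Suppose $J$ had a proper $k$-coloring. If $b$ and $a$ receive different colors, restricting to $V(G)$ properly colors $G$. Otherwise $b$ and $a$ share a color. Since $bc\in E(J)$, the colors of $a$ and $c$ then differ, and restricting to $V(H)$ (with $x=a$, $y=c$) properly colors $H$. Either case is a contradiction.

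\emph{Lower bounds.} I would prove $N(J)\ge N(G)$ when $H-xy$ is $k$-colorable; the other case is symmetric. Work over a splitting field $L$ of $x^k-1$, which is allowed by Lemma~\ref{lem:extension}. Fix a proper coloring of $H-xy$ by $k$th roots of unity. The endpoints $x,y$ get the same color, since otherwise $H$ would be colorable. Scale the coloring so that this common color is $1$. Take an optimal quotient certificate for $J$ (Lemma~\ref{lem:quotient}) and substitute $x_v\mapsto\omega_v x_a$ for each $v\in V(H)$, where $\omega_v$ is its color; in particular $\omega_a=\omega_c=1$.
\begin{itemize}
\item The relation $x_v^k=1$ is respected, so the substitution defines a ring map $R_{V(J)}\to R_{V(G)}$.
\item It does not increase degree, since each variable goes to a scalar multiple of one variable.
\item Each edge $uv$ of $H-xy$ maps to $q(\omega_u x_a,\omega_v x_a)=x_a^{k-1}q(\omega_u,\omega_v)=0$.
\item The edge $bc$ maps to $q(x_b,x_a)=q_{ab}$ up to symmetry; note $q$ is symmetric.
\item Edges of $G-ab$ are untouched.
\end{itemize}
The image is therefore a certificate for $G$ over $L$ of degree at most $N(J)$. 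Descending to $\F$ by Lemma~\ref{lem:extension} gives $N(G)\le N(J)$. Combining the two inequalities with Lemma~\ref{lem:congruence}, both $N(J)$ and $m$ are $\equiv 1\pmod k$. Hence $N(J)\in[m,m+k]$ forces $N(J)\in\{m,m+k\}$.

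\emph{Upper bound.} I would work in $R=R_{V(J)}$ modulo the ideal $I$ generated by the edge polynomials of $J$. The $G$-certificate, read in $R$, gives $h_{ab}q_{ab}\equiv1\pmod I$ with $\deg h_{ab}\le N(G)$. Here the other edges of $G$ are edges of $J$, and $q_{ab}=q(x_a,x_b)$. The $H$-certificate likewise gives $h_{ac}q_{ac}\equiv 1$ with $\deg h_{ac}\le N(H)$. The intended three-variable identity lives in $R_3=\F[s,t,u]/\langle s^k-1,t^k-1,u^k-1\rangle$ with $s=x_a$, $t=x_b$, $u=x_c$. Evaluating on the grid of roots of unity, $q(s,t)$ is supported on $\{s=t\}$ and there equals $ks^{k-1}$. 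So one should find $A,B,C$ of degree at most $k$ with
\[
1=A\,q(s,t)+B\,q(s,u)+C\,q(t,u)+(\text{terms vanishing on the grid}).
\]
This cannot hold outright, because $1$ is nonzero at points where $s,t,u$ are pairwise distinct. The correct shape must therefore use $q_{ab}$ and $q_{ac}$ only through the ``inverses'' $h_{ab},h_{ac}$.

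Concretely, I would look for an identity in which $1-h_{ab}q_{ab}$ and $1-h_{ac}q_{ac}$ each appear multiplied by a polynomial of degree $\le k$ in $s,t,u$, plus a multiple of $q_{bc}$ of degree $\le m+k$. The natural candidate comes from the indicator functions $\frac1k\sum_j (t/s)^j$ and $\frac1k\sum_j (u/s)^j$ of $s=t$ and $s=u$, each of degree $k-1$. On the grid these give $1=[s=t]+[s=u]-[s=t=u]+[s,t,u\text{ not all related}]$, and one then checks which pieces are absorbed by $q_{bc}$ and by the two input certificates. The danger is that naively multiplying the two certificates produces degree $N(G)+N(H)$. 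The key step to verify is that each input certificate needs to be multiplied only by a factor of degree at most $k$, coming from a grid-function identity in three variables, which yields $\max+k$. I would first test this on $k=2$ over $\F_3$ before generalizing.
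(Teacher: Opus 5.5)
\textbf{Verdict: the proposal has a genuine gap in the upper bound.} The other two parts are fine. Your non-colorability argument is correct; it is the classical combinatorial one, whereas the paper reads non-colorability off the certificate. Your lower-bound substitution is also correct and is exactly the paper's argument.

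The gap is the upper bound $N(J)\le m+k$, which is the substantive part of the theorem and on which your final dichotomy also depends. You guess the right shape, $1=\lambda q_{bc}+T_GU+T_HV$ with $U=1-h_{ab}q_{ab}$ and $V=1-h_{ac}q_{ac}$, but you never produce $T_G,T_H$. What is needed is a local identity $1=\lambda q(b,c)+T_G+T_H$ modulo $a^k=b^k=c^k=1$ (your $s,t,u$) with:
\begin{itemize}
\item $\deg T_G,\deg T_H\le k$;
\item $T_Gq(a,b)=0$ and $T_Hq(a,c)=0$.
\end{itemize}
Then $T_G=T_G(h_{ab}q_{ab}+U)=T_GU$ and $T_H=T_HV$, which gives multipliers of degree at most $m+k$. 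The paper gets this from the divided-difference identity $q(b,c)-ka^{k-1}=(b-a)P+(c-a)Q$, where $P,Q$ have degree $k-2$. It sets $T_G=-\frac ak(b-a)P$, $T_H=-\frac ak(c-a)Q$ and $\lambda=a/k$. The factors $b-a$ and $c-a$ annihilate $q(a,b)$ and $q(a,c)$ because $(b-a)q(a,b)=b^k-a^k$.

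The route you sketch would not reach $m+k$ for $k\ge3$.
\begin{itemize}
\item Your indicators $[a=b]=\frac1k\sum_j b^ja^{k-j}$ have reduced degree $k$, not $k-1$.
\item The only piece of your inclusion--exclusion split that is a multiple of $q_{bc}$ is $[a=b=c]$. If you charge it to $q_{bc}$, then $T_G+T_H=1-[a=b=c]$.
\item $[a=b=c]=[a=b][a=c]=\frac1{k^2}\sum_{i,j}b^ic^ja^{-(i+j)}$, with the exponent of $a$ reduced modulo $k$. It contains the reduced monomial $a^2b^{k-1}c^{k-1}$ of degree $2k$.
\item Hence one of $T_G,T_H$ has degree at least $2k$, and you only get $m+2k$. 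That is too weak for $N(J)\in\{m,m+k\}$.
\end{itemize}
Testing at $k=2$, as you planned, would hide this: there $[a=b][a=c]=\frac14(1+ab+ac+bc)$ has degree $2=k$.

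The missing observation is that $T_G$ and $T_H$ only need to vanish on the slices $a=b$ and $a=c$. So the part charged to $q_{bc}$ only has to agree with $[a=b=c]$ there. A telescoping split works:
\[
1=[b=c]+\bigl([a=c]-[b=c]\bigr)+\bigl(1-[a=c]\bigr),\qquad [b=c]=\tfrac bk\,q(b,c).
\]
Every piece has degree at most $k$. The middle piece is divisible by $a-b$ and the last by $a-c$. Indeed, the paper's $T_H$ expands to exactly $1-[a=c]$, and its $T_G$ to $[a=c]-\frac ak q(b,c)$.
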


A graph is $(k+1)$-critical if it has chromatic number $k+1$ and every
proper subgraph is $k$-colorable. Such inputs satisfy both deletion
hypotheses of Theorem~\ref{thm:main}.

\subsection{A local composition identity}

In the following calculation $a,b,c$ also denote the corresponding
variables. Define homogeneous polynomials of degree $k-2$ by
\begin{align}
 P(a,b,c)&=\sum_{j=0}^{k-2}c^j
             \sum_{r=0}^{k-2-j}b^{k-2-j-r}a^r,\label{eq:P}\\
 Q(a,c)&=\sum_{j=1}^{k-1}a^{k-1-j}
             \sum_{r=0}^{j-1}c^{j-1-r}a^r.\label{eq:Q}
\end{align}
Telescoping first in $b$ and then in $c$ gives the ordinary identity
\begin{equation}\label{eq:divided}
 q(b,c)-ka^{k-1}=(b-a)P(a,b,c)+(c-a)Q(a,c).
\end{equation}
Set
\begin{equation}\label{eq:T}
 T_G=-\frac ak(b-a)P(a,b,c),\qquad
 T_H=-\frac ak(c-a)Q(a,c).
\end{equation}
These polynomials have degree at most $k$. In the quotient by
$a^k=b^k=c^k=1$, they satisfy
\begin{equation}\label{eq:local}
 1=\frac akq(b,c)+T_G+T_H,
 \qquad T_Gq(a,b)=T_Hq(a,c)=0.
\end{equation}
The first equality follows from~\eqref{eq:divided}. The second uses
$(b-a)q(a,b)=b^k-a^k$ and its analogue for $a,c$.

\begin{proof}[Proof of the upper bound in Theorem~\ref{thm:main}]
Embed quotient certificates of the two inputs into the common ring
$R_{V(J)}$. They take the form
\begin{equation}\label{eq:inputs}
 1=Aq(a,b)+U,\qquad 1=Bq(a,c)+V,
\end{equation}
where $U$ and $V$ are combinations of retained edges of $G$ and $H$
respectively, with every edge multiplier of degree at most $m$.
Equations~\eqref{eq:local} and~\eqref{eq:inputs} imply
$T_G=T_GU$ and $T_H=T_HV$. Hence
\begin{equation}\label{eq:composition}
 \boxed{1=\frac akq(b,c)+T_GU+T_HV.}
\end{equation}
All edge constraints on the right belong to $J$. Their multipliers
have degree at most $m+k$, and the new-edge multiplier has degree $1$.
Lemma~\ref{lem:quotient} yields the ordinary certificate, proving
\eqref{eq:upper} and non-$k$-colorability.
\end{proof}

\subsection{A degree-preserving substitution}

\begin{proof}[Proof of the lower bounds in Theorem~\ref{thm:main}]
Assume $H-xy$ is $k$-colorable. Its endpoints $x,y$ receive the same
color in every proper $k$-coloring; otherwise that coloring would extend
to $H$. Let $L/\F$ be a finite splitting extension of $z^k-1$.
Label the colors of a chosen coloring by distinct $k$th roots
$\lambda_v\in L$, normalizing so that $\lambda_x=\lambda_y=1$.

Keep all variables from $G$ and make the substitution
\begin{equation}\label{eq:substitution}
 x_v\longmapsto\lambda_v x_a\quad(v\in V(H)).
\end{equation}
It agrees at the identified vertex, sends $x_c$ to $x_a$, and induces
a homomorphism between the quotient rings over $L$, since
$(\lambda_vx_a)^k-1=x_a^k-1$. Each retained edge of $G$ keeps its
constraint. Each retained edge $uv$ of $H$ maps to zero, because
\[
 q(\lambda_u x_a,\lambda_v x_a)
   =x_a^{k-1}q(\lambda_u,\lambda_v)=0
 \quad\text{when }\lambda_u\ne\lambda_v.
\]
The new-edge constraint $q(x_b,x_c)$ maps to $q(x_b,x_a)$, the
constraint of the deleted edge of $G$.

Consequently a degree-$d$ certificate for $J$ gives a certificate for
$G$ over $L$ whose degree is at most $d$. Substituting scalar multiples
of single variables and then reducing exponents cannot increase degree.
Lemma~\ref{lem:extension} descends the certificate to $\F$, so
$N(G)\leq d$. Interchanging the inputs proves the other lower bound.
If both hypotheses hold, Lemma~\ref{lem:congruence} restricts the
integer interval $[m,m+k]$ to $m$ and $m+k$.
\end{proof}

\begin{remark}
The substitution is a retraction on the variables of the retained input,
after extending coefficients. The criticality assumption is used only
to guarantee a coloring after deletion of the selected edge. No such
assumption is needed for the upper bound.
\end{remark}

\section{Examples over \texorpdfstring{$\F_2$}{F2}}
\label{sec:examples}

In this section $k=3$ and $\F=\F_2$. The local factors simplify to
\begin{equation}\label{eq:binary}
 T_G=1+ab^2+a^2c+abc,\qquad T_H=a^2c+ac^2,
\end{equation}
where $a^3=1$ has been used in the first expression. Thus
\eqref{eq:composition} is a particularly small explicit transformation.

Label $K_4$ by $0,1,2,3$. Join two copies along the oriented edges
$(0,1)$, identifying their vertex $0$ and labeling the other vertices
of the right copy by $4,5,6$ in order. The resulting Moser spindle $M$
has vertex set $\{0,\ldots,6\}$ and edge set
\begin{equation}\label{eq:M}
 E(M)=\{02,03,05,06,12,13,14,23,45,46,56\}.
\end{equation}
Here $ij$ denotes the unordered edge $\{i,j\}$. Let $C$ be the join
of $M$ along $(0,2)$ with $K_4$ along $(0,1)$, giving the other
vertices of the right copy the labels $7,8,9$. Thus
\begin{equation}\label{eq:C}
 E(C)=\{03,05,06,08,09,12,13,14,23,27,45,46,56,78,79,89\}.
\end{equation}

\begin{proposition}\label{prop:sharp}
The graphs $K_4,M,C$ are $4$-critical and have degrees
$N(K_4)=1$, $N(M)=4$, and $N(C)=4$. In particular, both alternatives
in~\eqref{eq:twovalues} occur, and the additive constant $3$ in the
upper bound cannot be decreased for $k=3$ over $\F_2$.
\end{proposition}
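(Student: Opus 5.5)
The plan has three parts: criticality, the lower bounds on degree, and the upper bounds on degree.

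\emph{Criticality.} For $K_4$ this is immediate. For $M$ and $C$, I would show that every edge deletion leaves a $3$-colorable graph. One route is the classical fact that a Haj\'os join of two $4$-critical graphs is $4$-critical. Alternatively, one can check each of the $11$ and $16$ edge deletions by exhibiting a $3$-coloring. Non-$3$-colorability itself follows from Theorem~\ref{thm:main}.

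\emph{Lower bounds.} By Lemma~\ref{lem:congruence}, every degree lies in $\{1,4,7,\dots\}$. For $K_4$, Lemma~\ref{lem:quotient} shows that a degree-zero certificate is impossible, so $N(K_4)\ge1$. For $M$ and $C$, I would apply Theorem~\ref{thm:main}, which gives $N(M)\ge N(K_4)=1$ and $N(C)\ge N(M)$, and then improve this to $N(M)\ge 4$ by ruling out degree $1$. For this I would use a dual certificate: an $\F_2$-linear functional $\ell$ on the degree-$\le 3$ part of $R_V$ (the products $h_{uv}q_{uv}$ with $\deg h_{uv}\le1$ have degree $\le3$) such that $\ell(1)=1$ and $\ell(m\,q_{uv})=0$ for every edge $uv$ and every monomial $m$ of degree $\le1$.

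A natural way to build $\ell$ is to exploit locality. Degree-$1$ multipliers only couple each edge with single variables, so $\ell$ can be taken to be the expectation of a pseudo-distribution on triples of vertices whose projections are consistent with proper colorings of every triangle and edge. The spindle $M$ has no global $3$-coloring but is locally colorable on all $\le3$-sets, which suggests this will work. Concretely, I would solve the finite linear system, either by computer or by hand using the symmetry swapping the two copies, and record the witness. Then $N(C)\ge N(M)=4$ follows from Theorem~\ref{thm:main}, since $M-02$ is $3$-colorable by criticality.

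\emph{Upper bounds.} For $K_4$, I would exhibit an explicit degree-$1$ certificate over $\F_2$; alternatively, I would verify that the degree-$1$ linear system is solvable. For $M$, Theorem~\ref{thm:main} gives $N(M)\le 1+3=4$. For $C$, Theorem~\ref{thm:main} gives only $N(C)\in\{4,7\}$. To show $N(C)=4$, I need a degree-$4$ certificate for $C$, which I would obtain by solving the degree-$4$ linear system over $\F_2$. Failing that, I would try to build one by hand: substitute the $K_4$ certificate into the composition identity~\eqref{eq:composition} more cleverly. Since $T_H$ times a degree-$1$ certificate has degree only $4$, the only obstruction is $T_GU$ with $U$ of degree $4$, so I would look for a cancellation or reduction that brings $T_GU$ down.

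\emph{Consequences and main obstacle.} The final claims follow directly. $C$ realizes the value $m$ (here $m=4$), and $M$ realizes $m+k$ (here $m=1$, giving $4$). For $M$ the upper bound $m+3$ is attained, so the additive constant $3$ cannot be decreased. The main obstacle is the exact verification of the two finite facts, the nonexistence of a degree-$1$ certificate for $M$ and the existence of a degree-$4$ certificate for $C$. I expect to settle both by exhibited witnesses that can be checked by linear algebra.
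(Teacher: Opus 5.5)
Your plan matches the paper's proof: a degree-one certificate for $K_4$, the join upper bound together with a degree-one dual functional and the congruence lemma for $M$, an explicit degree-four certificate for $C$ with $N(C)\geq N(M)$ from the substitution lower bound, and criticality from preservation under Haj\'os joins. One correction: the hypothesis that gives $N(C)\geq N(M)$ is that the \emph{other} input $K_4-01$ is $3$-colorable, because the substitution collapses the $K_4$ side onto one variable; $3$-colorability of $M-02$ yields only $N(C)\geq N(K_4)$.
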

\begin{proof}
The degree of $K_4$ is witnessed by the linear multipliers in
Appendix~\ref{app:witnesses}; degree zero is impossible.
Theorem~\ref{thm:main} gives $N(M)\leq4$. The dual functional in the
appendix excludes degree $1$, and Lemma~\ref{lem:congruence} then gives
$N(M)=4$. For $C$, the same appendix gives a degree-$4$ certificate,
while the lower-bound part of Theorem~\ref{thm:main} gives
$N(C)\geq N(M)=4$.

For completeness, a join of $4$-critical graphs is $4$-critical.
Non-$3$-colorability follows from the join construction. If a retained
edge of the left input is removed, color that input with the edge
removed; its selected-edge endpoints have different colors. Color the
right input with its selected edge removed, so its selected endpoints
have the same color. Permute colors to agree at the identified vertex.
The added edge is then properly colored. The other side is symmetric.
If the added edge is removed, combine colorings of the two inputs with
their selected edges removed. Thus every edge deletion is $3$-colorable.
There are no isolated vertices, so all vertex deletions are also
$3$-colorable. An edge-deletion coloring can be changed to a $4$-coloring
by giving one endpoint a fourth color. Starting with $K_4$ proves the
criticality assertions for $M$ and $C$.
\end{proof}

These benchmark degrees occur in the classification of $4$-critical
graphs through twelve vertices in~\cite{LLO}. They show that both
endpoints of~\eqref{eq:twovalues} occur for $k=3$ over $\F_2$.

\section{An explicit family of exact degree seven}
\label{sec:degree7}

Throughout this section, $k=3$ and the coefficient field is $\F_2$.
All polynomial identities are in the vertex quotient $x_v^3=1$.
The proof uses a computed dual certificate for one graph and an algebraic
substitution to obtain the lower bound for the family.

\subsection{Construction and statement}

For $r\geq1$, define the closed diamond chain $D_r$ with terminals
$t_0,\ldots,t_r$. For each $0\leq i<r$, introduce two internal vertices
$p_i,q_i$ and the five edges
\[
 t_ip_i,\quad t_iq_i,\quad t_{i+1}p_i,\quad t_{i+1}q_i,\quad p_iq_i.
\]
Add the closing edge $t_0t_r$. Thus $D_r$ has $3r+1$ vertices and
$5r+1$ edges. For $r,s\geq2$, let $T_{r,s}$ be the \HJ\ join of disjoint
copies of $D_r,D_s$ along the oriented edge $(p_1,q_1)$ in each copy.
In particular, the two $p_1$ vertices are identified and the two $q_1$
vertices are connected by the new bridge.
The selected edge exists as soon as the chain has two diamonds.
The restriction $r,s\geq4$ below is used for the exact-degree argument,
which folds the chains onto $D_4$ and the join onto $T_{4,4}$.

\begin{theorem}[Exact degree-seven family]\label{thm:seven}
The graph $D_r$ is $4$-critical for every $r\geq1$, and $T_{r,s}$ is
$4$-critical for every $r,s\geq2$. For $r,s\geq4$,
\[
 N(D_r)=4,\qquad \boxed{N(T_{r,s})=7.}
\]
For every $r,s\geq2$, the graph $T_{r,s}$ has $3(r+s)+1$ vertices and
$5(r+s)+1$ edges. For $r,s\geq4$, it admits a degree-seven certificate
with at most $24(r+s)-3$ monomial-edge terms.
The base graph $T_{4,4}$ has $25$ vertices and $41$ edges, and the supplied
upper certificate has $189$ terms. Consequently, the upper bound of
Theorem~\ref{thm:main} is attained with both input degrees equal to four.
\end{theorem}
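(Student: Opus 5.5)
The proof splits into four parts: criticality and counting, the degree of $D_r$, the upper bound for $T_{r,s}$ with its term count, and the lower bound for $T_{r,s}$.

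\emph{Criticality and counting.} I would first show that $D_r$ is $4$-critical by a direct argument.
\begin{itemize}
\item In any $3$-coloring of a diamond $t_ip_iq_it_{i+1}$, the triangles force $t_i$ and $t_{i+1}$ to share a color. Hence all terminals share a color, and the closing edge $t_0t_r$ is violated.
\item Deleting $t_0t_r$ leaves a $3$-colorable graph.
\item Deleting an edge inside the $i$th diamond lets $t_i$ and $t_{i+1}$ receive different colors. The other diamonds then propagate colors consistently, and the closing edge is satisfied.
\item The absence of isolated vertices and the argument in the proof of Proposition~\ref{prop:sharp} then give criticality.
\end{itemize}
Criticality of $T_{r,s}$ follows from the join-criticality argument in that same proof. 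The vertex and edge counts are bookkeeping: $(3r+1)+(3s+1)-1$ vertices, and $(5r+1)+(5s+1)-2+1$ edges.

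\emph{Degree of $D_r$.} For the upper bound I would build an explicit degree-$4$ certificate that propagates along the chain. Each diamond is itself a Hajós-type gadget, and the local identity \eqref{eq:local} propagates $x_{t_i}=x_{t_{i+1}}$ in the quotient: $q(x_{t_i},x_{p_i})$, $q(x_{t_i},x_{q_i})$ and $q(x_{p_i},x_{q_i})$ force $x_{t_{i+1}}=x_{t_i}$ modulo the ideal. I would try to write a uniform multiplier pattern with $O(1)$ terms per diamond and check it symbolically. The lower bound $N(D_r)\ge 4$ needs only that degree $1$ fails. By Lemma~\ref{lem:congruence}, I would exhibit a dual functional for $D_4$, or more uniformly a substitution folding $D_r$ onto $D_4$. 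For the fold, I would collapse diamonds by mapping the variables of a whole diamond $i\ge 4$ to scalar multiples of $x_{t_4}$, using a coloring of the diamond in which $t_i$ and $t_{i+1}$ receive the same color. Each internal edge then maps to zero, as in the proof of the lower bound in Theorem~\ref{thm:main}. Descent then follows by Lemma~\ref{lem:extension}. This reduces the lower bound to the single checked case $r=4$, and lower bounds need only $r\ge 4$.

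\emph{Upper bound for $T_{r,s}$.} Theorem~\ref{thm:main} already gives $N(T_{r,s})\le 4+3=7$. For the term count, I would write the certificate \eqref{eq:composition} from uniform chain certificates. If the $D_r$ certificate has about $24r-c$ monomial-edge terms, the products $T_GU$ and $T_HV$ expand to a linear count. I would tune this to $24(r+s)-3$, and verify $189=24\cdot 8-3$ for the base case.

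\emph{Lower bound for $T_{r,s}$ (main obstacle).} By Lemma~\ref{lem:congruence}, $N\in\{4,7\}$, so it suffices to exclude degree $4$. For $T_{4,4}$ I would rely on a computed dual functional: a linear functional on $R_V$ vanishing on all products $m\cdot q_{uv}$ with $\deg m\le 4$ but not on $1$. Checking it is finite linear algebra over $\F_2$ in $25$ variables. For general $r,s\ge 4$, I would fold $T_{r,s}$ onto $T_{4,4}$ with the diamond-collapsing substitution above. The extra diamonds must be away from the join vertex $p_1$ and the bridge, which is why $r,s\ge 4$ are needed. The substitution sends constraints of $T_{r,s}$ either to constraints of $T_{4,4}$ or to zero, and it does not raise degree. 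Hence a degree-$4$ certificate for $T_{r,s}$ would yield one for $T_{4,4}$, a contradiction. The hard part is the existence and verification of the $T_{4,4}$ dual certificate, which I would obtain by solving the degree-$4$ linear system and record it as an appendix witness. A secondary difficulty is arranging the fold so that the closing edge $t_0t_r$ maps exactly onto $t_0t_4$.
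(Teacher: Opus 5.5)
Your outline follows the paper's: the coloring argument for criticality, a uniform degree-four chain certificate fed into the \HJ\ bound, a computed dual functional for $T_{4,4}$, and a fold through $\F_4$ followed by descent. Collapsing the tail diamonds $i\ge4$ onto $t_4$ works as well as the paper's fold, which collapses the middle diamonds onto $t_3$. Using Lemma~\ref{lem:congruence} to pass from ``no certificate of degree at most four'' to $N\ge7$ is also fine. (The paper's functional instead vanishes outside degrees $0,3,6$, so the grading excludes all multipliers through degree six directly.) The gaps are in the two upper-bound steps, which you describe but do not supply.

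First, you never construct the degree-four certificate for $D_r$ that is uniform in $r$, and the identity you cite does not give it. \eqref{eq:local} is the join composition, and $D_r$ is itself the \HJ\ join of $D_{r-1}$ with $K_4$ along the closing edge. Iterating Theorem~\ref{thm:main} therefore only gives $N(D_r)\le 3r-2$. What is needed is a telescoping identity: each diamond with terminals $u,v$ and internal vertices $p,z$ must write $u+v$ as a combination of its five edge constraints with coefficient degree \emph{two}. The paper gives one explicitly:
$f_{up}=uv+uz+v^2+vz$, $f_{uz}=v^2$, $f_{vp}=u^2+uv+uz+vz$, $f_{vz}=u^2$, $f_{pz}=u^2+v^2$.
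Summing over diamonds represents $t_0+t_r$ at degree two. Then $1=t_0q_{t_0t_r}+t_0t_r(t_0+t_r)$ closes at degree four with at most $12r+1$ terms. ``$O(1)$ terms per diamond'' does not control this degree, and the degree is the whole point.

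Second, the term count does not follow from \eqref{eq:composition}. Over $\F_2$ the factor $T_G=1+ab^2+a^2c+abc$ has four monomials, so that route gives up to $4(12r-1)+2(12s-1)+1$ terms. That is $283$ rather than $189$ for $T_{4,4}$, and it exceeds $24(r+s)-3$ in general. The stated bound needs the compact composition $1=bq_{bc}+bc(b+a)U+bc(a+c)V$. It is valid because $(b+a)q_{ab}=(a+c)q_{ac}=0$ in the quotient and $bq_{bc}+bc(b+c)=b^3$. Its two degree-three prefactors have two monomials each. Dropping the two selected-edge terms from each chain certificate then gives $2(12r-1)+2(12s-1)+1=24(r+s)-3$. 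Your estimate of about $24r$ terms per chain is also off by a factor of two.

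As a side remark, your observation that $D_r$ is an iterated join does help the lower bound. The lower-bound half of Theorem~\ref{thm:main} gives $N(D_r)\ge N(D_{r-1})\ge\cdots\ge N(D_2)=4$, since $D_2$ is the Moser spindle. This needs no separate $D_4$ functional or fold for the chains.
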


\begin{proof}[Criticality]
This argument applies to $D_r$ for $r\geq1$ and to $T_{r,s}$ for
$r,s\geq2$.
In a properly three-colored diamond, the two internal vertices have distinct
colors, forcing its two terminals to share the third color. A coloring of
$D_r$ would therefore make all terminals equal, violating the closing edge.

Deleting the closing edge permits a coloring with all terminals equal.
Deleting any edge of one diamond permits that diamond's terminals to have
distinct colors. For deletion of its internal edge, give both internal
vertices the third color. For deletion of a terminal-to-internal edge, give
that internal endpoint the color of the now nonadjacent terminal, and the
other internal vertex the third color. Assign one color to all terminals
before this diamond and a second color to all terminals after it. Every
intact diamond and the closing edge can then be properly colored.
Thus every edge deletion of $D_r$ is three-colorable. There are no isolated
vertices, so this also implies vertex-deletion colorability, and an
edge-deletion coloring extends to a four-coloring by recoloring one endpoint
with a fourth color. Hence $D_r$ is $4$-critical. The preservation argument
in Proposition~\ref{prop:sharp} proves criticality of $T_{r,s}$.
\end{proof}

\subsection{Explicit upper certificates}

For a diamond with terminals $u,v$ and internal vertices $p,z$, the following
five multipliers give a local equality identity:
\begin{equation}\label{eq:diamond-equality}
 u+v=\sum_{e\in\{up,uz,vp,vz,pz\}} f_e q_e,
\end{equation}
where vertex names also denote their variables and
\begin{center}
\begin{tabular}{@{}cl@{}}
\toprule
$e$ & $f_e$ \\
\midrule
$up$ & $uv+uz+v^2+vz$ \\
$uz$ & $v^2$ \\
$vp$ & $u^2+uv+uz+vz$ \\
$vz$ & $u^2$ \\
$pz$ & $u^2+v^2$ \\
\bottomrule
\end{tabular}
\end{center}
Expanding in the ordinary polynomial ring over $\F_2$ and cancelling
equal terms gives
\begin{align*}
 \sum_e f_e q_e
 &=u^3v+u^3z+uv^3+v^3z\\
 &=u^3(v+z)+v^3(u+z).
\end{align*}
Modulo $u^3=v^3=1$, this is $(v+z)+(u+z)=u+v$, proving
\eqref{eq:diamond-equality}.
It has twelve monomial-edge terms, all with coefficient degree two.
Summing it over the diamonds gives a coefficient-degree-two representation
of $t_0+t_r$. The quotient identity
\begin{equation}\label{eq:chain-upper}
 1=t_0 q_{t_0t_r}+t_0t_r(t_0+t_r)
\end{equation}
therefore gives a degree-at-most-four certificate for $D_r$ with at most
$12r+1$ terms. At the selected edge $p_1q_1$, its coefficient has exactly
the two terms obtained from $t_0t_r(t_1^2+t_2^2)$ when $r\geq4$.

For this family we use a compact alternative to~\eqref{eq:binary}.
Given input certificates $1=Aq_{ab}+U$ and $1=Bq_{ac}+V$ as
in~\eqref{eq:inputs}, one has
\begin{equation}\label{eq:binary-short}
 \boxed{1=bq_{bc}+bc(b+a)U+bc(a+c)V.}
\end{equation}
Indeed, $(b+a)q_{ab}=(a+c)q_{ac}=0$ in the quotient. Substituting
$U=1+Aq_{ab}$ and $V=1+Bq_{ac}$ reduces the right side to
$bq_{bc}+bc(b+c)=b^3=1$.
The two prefactors have degree three, proving $N(T_{r,s})\leq7$.
Removing the selected edge from the chain certificates leaves at most
$12r-1$ and $12s-1$ terms. Each prefactor has two monomials, so the resulting
certificate has at most $2(12r-1)+2(12s-1)+1=24(r+s)-3$ terms.
The file \texttt{anc/degree7/family.py} implements these formulas directly.

\subsection{The finite lower certificate}
\label{sec:seven-lower}

Label $t_i$ by $i$, $p_i$ by $r+1+2i$, and $q_i$ by
$r+2+2i$ in $D_r$. In $T_{4,4}$, retain the left labels $0,\ldots,12$,
identify right vertex $7$ with left vertex $7$, and label the remaining
right vertices $13,\ldots,24$ in increasing order of their old labels.
The resulting edge list is given in
\texttt{anc/degree7/base\_graph.json}.

\begin{lemma}[Finite certificate check]\label{lem:finite-seven}
There is an $\F_2$-linear functional $\Lambda$ on the reduced monomial
basis of $R_{V(T_{4,4})}$ such that
\begin{equation}\label{eq:dual-seven}
 \Lambda(1)=1,\qquad \Lambda(Mq_e)=0
 \quad(e\in E(T_{4,4}),\ \deg M\leq4).
\end{equation}
It vanishes outside total degrees zero, three, and six.
\end{lemma}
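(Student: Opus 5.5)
The lemma is a finite linear-algebra statement, and I will prove it by exhibiting $\Lambda$ explicitly and checking it mechanically. The existence of such a functional is equivalent, by the usual duality, to the nonexistence of a quotient certificate of coefficient degree at most $4$ for $T_{4,4}$.

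The degree-$\le 4$ quotient certificates form an $\F_2$-subspace $W$ of $R_V$, where $V=V(T_{4,4})$ has $25$ vertices. This subspace is spanned by the products $Mq_e$, with $M$ a reduced monomial of degree at most $4$ and $e$ ranging over the $41$ edges. Each such product, after reduction, lies in the finite-dimensional span of reduced monomials of degree at most $4+2=6$ (reduction never increases degree). A functional with $\Lambda(W)=0$ and $\Lambda(1)=1$ exists exactly when $1\notin W$, which is a rank computation over $\F_2$.

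I will cut the size down with the grading from Lemma~\ref{lem:congruence}. Each $q_e$ has grade $2$ modulo $3$, so the grade-$0$ part of the equation $1\in W$ only involves multipliers $M$ of grade $1$, that is, of degree $1$ or $4$. Their products $Mq_e$ live in degrees $0,3,6$ modulo reduction. So I search for $\Lambda$ supported on reduced monomials of total degree $0$, $3$, and $6$, and set it to zero elsewhere. This matches the claimed support.

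With that support, the conditions $\Lambda(Mq_e)=0$ for $M$ of degree $0,2,3$ hold automatically: the reduced product then has grade $2$, $1$, or $2$, so all its monomials have degrees outside $\{0,3,6\}$. Degree-$2$ multipliers reduce into grade $1$, and reductions preserve grade. Hence only the constraints with $\deg M\in\{1,4\}$ need checking. These form a sparse linear system over $\F_2$ in the unknowns $\Lambda(\mu)$, for reduced monomials $\mu$ of degree $0$, $3$, $6$ in $25$ variables with exponents below $3$.

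The remaining steps are as follows. Solve this system by sparse Gaussian elimination over $\F_2$ with the normalization $\Lambda(1)=1$. Record a solution, ideally a sparse one, as the supplied witness. Then verify independently, by a standalone program, that each of the finitely many products $Mq_e$ with $\deg M\in\{1,4\}$ is annihilated. For each such product I expand $Mq_e=Mx_u^2+Mx_ux_v+Mx_v^2$, reduce exponents modulo $3$, and sum the $\Lambda$-values of the resulting monomials over $\F_2$.

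The main obstacle is scale. The number of degree-$6$ reduced monomials in $25$ variables and the number of constraints with $\deg M=4$ are both in the hundreds of thousands to millions, so a naive dense solve is infeasible.

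I would first reduce using the graph's symmetry: the swap of the two chains fixing the joined vertex, and chain reflections compatible with the selected edge. I would look for a $\Lambda$ that is invariant under these symmetries. I would also try to restrict the degree-$6$ support to monomials that are products of local patterns on diamonds, guessing the structure from the $D_4$ dual functional that proves $N(D_4)\ge 4$ (itself degree $\le 1$ obstructed).

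If guessing fails, I would fall back on a sparse solver over $\F_2$ (Wiedemann or block Lanczos) applied to the full grade-$0$ system. The lemma's proof then consists of the witness file plus the statement that the checker confirms the two conditions in~\eqref{eq:dual-seven}.
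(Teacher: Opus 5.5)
Your plan matches the paper's computer-assisted proof: an explicit $\Lambda$ supported on reduced monomials of degrees $0,3,6$ is supplied as a witness file, and every product $Mq_e$ is then checked exactly. The one difference is that you prune the check to $\deg M\in\{1,4\}$ using the mod-$3$ grading, which is valid. The paper instead verifies all $947{,}141$ multiples with $\deg M\le 4$ using two independent implementations. Note also that the lemma is established only once a witness is actually produced (the paper's has support size $104{,}833$), since consistency of the linear system is the entire content.
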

\begin{proof}[Computer-assisted verification]
The ancillary file \texttt{base\_lower.json} lists the reduced monomials
on which $\Lambda$ takes value one; it is zero on every unlisted monomial.
There are $104{,}833$ listed monomials, distributed as follows:
\begin{center}
\begin{tabular}{@{}rrrr@{}}
\toprule
Degree & $0$ & $3$ & $6$ \\
\midrule
Support size & $1$ & $883$ & $103{,}949$ \\
\bottomrule
\end{tabular}
\end{center}
Two exact arithmetic implementations validate the support, the value on
the constant, and every equality in~\eqref{eq:dual-seven}. The numbers of
reduced multiplier monomials of degrees zero through four are, respectively,
$1,25,325,2{,}900,19{,}850$. Hence the check evaluates
\[
 41(1+25+325+2{,}900+19{,}850)=947{,}141
\]
monomial-edge multiples. One implementation
uses tuples of variable indices and reduction by multiplicity modulo three.
The other uses base-three integer codes for exponent vectors and recursive
enumeration; it also compares the counts with the coefficients of
$(1+t+t^2)^{25}$. The second checker imports neither the search program nor
the first checker. Reproduction instructions and artifact hashes appear
in Appendix~\ref{app:reproduction}.
\end{proof}

Since $q_e$ has grade two modulo three, a multiplier monomial of degree
five or six gives only monomials of grade one or two after reduction.
The functional in Lemma~\ref{lem:finite-seven} vanishes on these grades.
It therefore annihilates every permitted product through coefficient
degree six. Applying it to a putative refutation would give $1=0$.
Thus $N(T_{4,4})\geq7$, and~\eqref{eq:binary-short} proves equality.

The supplied file \texttt{D4\_lower\_degree1.json} similarly gives a
degree-one obstruction for $D_4$. The checker verifies all $21(1+13)=294$
monomial-edge multiples. Lemma~\ref{lem:congruence} and
\eqref{eq:chain-upper} then prove $N(D_4)=4$.

\subsection{Folding over a field extension}
\label{sec:folding}

Let $\zeta^2+\zeta+1=0$ and $L=\F_4=\F_2(\zeta)$, so $\zeta^3=1$.
For every $r\geq4$ define a substitution from the vertex quotient of
$D_r$ to that of $D_4$ over $L$. On terminals set
\begin{equation}\label{eq:fold-terminals}
 t_0\mapsto t_0,\quad t_1\mapsto t_1,\quad t_2\mapsto t_2,
 \quad t_i\mapsto t_3\ (3\leq i<r),\quad t_r\mapsto t_4.
\end{equation}
Keep the internal vertices of diamonds $0,1,2$ at their counterparts in
$D_4$; send those of diamond $r-1$ to $p_3,q_3$. For each intervening
diamond $3\leq i\leq r-2$, set
\begin{equation}\label{eq:fold-internals}
 p_i\mapsto\zeta t_3,\qquad q_i\mapsto\zeta^2 t_3.
\end{equation}
There are no intervening diamonds when $r=4$.

Every variable image has cube one. All five edge polynomials of a collapsed
diamond map to zero: their two endpoints have distinct scalar factors from
$\{1,\zeta,\zeta^2\}$ multiplying $t_3$. Every other edge polynomial maps
to the corresponding edge polynomial of $D_4$, including the closing edge
and the selected edge $p_1q_1$. Each variable maps to a scalar times a single
variable, so substitution and reduction cannot increase coefficient degree.
By Lemma~\ref{lem:extension}, any certificate for $D_r$ therefore gives one
of no larger degree for $D_4$ over $\F_2$. It follows that $N(D_r)\geq4$.

Apply the same substitutions on both inputs of $T_{r,s}$. They agree at
the identified $p_1$ vertex and preserve the new bridge, inducing a map to
the vertex quotient of $T_{4,4}$ over $L$. Every retained edge maps to an
edge constraint or zero. A degree-$d$ refutation of $T_{r,s}$ would give
one of $T_{4,4}$ over $L$ of degree at most $d$. Applying the $\F_2$-linear
projection $a+b\zeta\mapsto a$ coefficientwise descends this refutation
to $\F_2$. Hence
\[
 N(T_{r,s})\geq N(T_{4,4})=7.
\]
This proves the lower bound for every $r,s\geq4$ and completes the proof
of Theorem~\ref{thm:seven}.

As finite checks of the formulas, the ancillary verifier constructs upper
certificates and checks every edge image for $(r,s)=(4,4),(5,4),(4,5),(6,7)$,
and $(16,16)$. The last graph has $97$ vertices and $161$ edges; its
certificate has $765$ terms.

\section{Construction depth}
\label{sec:depth}

\begin{corollary}\label{cor:depth}
Suppose $G$ has a construction from non-$k$-colorable leaf graphs of
degree at most $d_0$, using \HJ\ joins, identifications of nonadjacent
vertices, and additions of edges or vertices. Let $h$ be the maximum
number of join nodes on a leaf-to-root path. Then $N(G)\leq d_0+kh$.
\end{corollary}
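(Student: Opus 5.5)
We argue by induction on the construction tree. For each node we prove that the graph there is non-$k$-colorable and satisfies $N\leq d_0+k h'$, where $h'$ is the maximum number of join nodes on a path from a leaf to that node. The construction is viewed as a rooted tree. Leaves are the given graphs. Internal nodes are of three kinds: binary \HJ\ join nodes, and unary nodes performing an identification of two nonadjacent vertices or an addition of edges or vertices. At a leaf, $h'=0$, and the bound $N\leq d_0$ is the hypothesis.

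For a join node with children $G,H$, the inductive hypothesis gives $N(G)\leq d_0+kh_G$ and $N(H)\leq d_0+kh_H$. Here $h_G$ and $h_H$ are the join counts in the two subtrees, and the node's own count is $1+\max\{h_G,h_H\}$. The upper bound~\eqref{eq:upper} of Theorem~\ref{thm:main} requires only non-$k$-colorability of the inputs; no criticality is needed, as the remark after the theorem notes. It gives non-$k$-colorability of the join and
\[
N(J)\leq \max\{N(G),N(H)\}+k\leq d_0+k(1+\max\{h_G,h_H\}).
\]

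For the unary nodes we show that $N$ does not increase, while the join count is unchanged. First consider adding edges or vertices. A certificate for the smaller graph is still a certificate in the larger polynomial ring: new vertices and edges receive zero multipliers, and non-$k$-colorability persists. So $N$ can only decrease. Next consider identifying nonadjacent vertices $u,w$ into a single vertex. We apply the substitution $x_w\mapsto x_u$, which is a ring homomorphism between the vertex quotients. The vertex relation for $w$ maps to the relation for $u$. Each edge $vw$ maps to the edge constraint $q_{vu}$ of the quotient graph, and that edge exists there. No edge $uw$ exists, so no constraint degenerates into $q(x_u,x_u)$, which would not be an edge constraint. Possible parallel edges merge, and their multipliers simply add. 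Degrees do not increase under this substitution followed by reduction, so $N$ of the identified graph is at most $N$ of the original. Non-$k$-colorability follows because a coloring of the quotient would pull back to a coloring of the original graph.

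The main point needing care is the identification step. Nonadjacency is what keeps every image constraint a genuine edge polynomial. One must also note that unary nodes do not count toward $h$, so they cost nothing. With both checks in place, induction up to the root gives $N(G)\leq d_0+kh$.
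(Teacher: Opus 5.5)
Your proof is correct and takes essentially the same route as the paper: induction through the construction, using the upper bound of Theorem~\ref{thm:main} at join nodes (which needs only non-$k$-colorability), the substitution $x_w\mapsto x_u$ for identifications of nonadjacent vertices, and zero multipliers for added edges or vertices. Your version just spells out the bookkeeping that the paper leaves implicit, namely tracking the join count at each node and carrying non-$k$-colorability up the construction.
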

\begin{proof}
A join increases the larger available bound by at most $k$.
Identifying nonadjacent vertices substitutes one variable for another,
sending each edge constraint to an edge constraint without increasing
degree. Adding edges or vertices preserves an existing certificate.
Induction through the construction proves the result.
\end{proof}

Any lower bound $N(G)\geq D$ therefore implies
$h\geq\lceil(D-d_0)/k\rceil$. This bounds construction depth. An
unbalanced tree can have large depth and few nodes, so a size lower
bound requires a separate argument.

\section{A limitation of joins alone}
\label{sec:joins-only}

The construction depth in Corollary~\ref{cor:depth} need not be
logarithmic in the graph order. Nevertheless, when additional vertex
identifications are excluded, a structural restriction gives a
logarithmic bound on certificate degree.

A tree decomposition of a graph consists of vertex sets, called bags,
indexed by a tree, such that every vertex occurs in a bag, every edge
has its endpoints in a common bag, and the bags containing any fixed
vertex induce a connected subtree. Its width is the largest bag size
minus one. The treewidth $\tw(G)$ is the minimum such width.

\begin{proposition}\label{prop:join-width}
For every \HJ\ join $J$ of $G$ and $H$,
\[
 \tw(J)\leq\max\{\tw(G),\tw(H),2\}.
\]
\end{proposition}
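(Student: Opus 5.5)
We build a tree decomposition of $J$ directly from optimal decompositions of the two inputs. Let $(T_G,\{B_s\})$ and $(T_H,\{C_t\})$ be tree decompositions of $G$ and $H$ of widths $\tw(G)$ and $\tw(H)$. Deleting the selected edges $ab$ and $xy$ leaves both families valid decompositions of $G-ab$ and $H-xy$, because removing edges only weakens the edge condition. After identifying $x$ with $a$, the union $(G-ab)\cup(H-xy)$ is a clique-sum of the two graphs along the single vertex $a$. Choose a node $s_0$ of $T_G$ whose bag contains both $a$ and $b$; it exists because $ab$ is an edge of $G$. Likewise choose $t_0$ of $T_H$ whose bag contains $x$ and $y$. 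In the bags of $T_H$, rename $x$ to $a$ and $y$ to $c$.

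The new edge $bc$ joins a vertex of one side to a vertex of the other, so we cannot simply connect $s_0$ to $t_0$: bags between them must also carry $b$ or $c$. Instead we insert a small connector. Add a new node $w$ with bag $\{a,b,c\}$, and join $w$ to $s_0$ and to $t_0$. The resulting tree is a tree decomposition of $J$, and we check the three conditions in turn.

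\begin{itemize}
\item \emph{Coverage.} Every vertex of $J$ lies in some bag.
\item \emph{Edge condition.} Every retained edge lies in a bag of its own side. The new edge $bc$ lies in the bag of $w$.
\item \emph{Connectivity.} For a vertex other than $a,b,c$, its bags lie entirely in one side and form a subtree there. For $a$, its bags form a subtree of $T_G$ containing $s_0$, the node $w$, and a subtree of $T_H$ containing $t_0$. These are connected through the edges $s_0w$ and $wt_0$. For $b$, its bags form a subtree of $T_G$ containing $s_0$, together with the node $w$ adjacent to $s_0$, so they are connected. The case of $c$ is symmetric, with $t_0$ in place of $s_0$.
\end{itemize}

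The old bags have sizes at most $\tw(G)+1$ and $\tw(H)+1$, and the new bag has size $3$. Hence the width is at most $\max\{\tw(G),\tw(H),2\}$.

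The only delicate point is the connectivity condition for $b$ and $c$, which is exactly why the connector bag is used instead of adding $c$ to bags of $T_G$, a choice that could raise the width. The choice of $s_0$ and $t_0$ containing the selected edges is what makes connectivity for $b$ and $c$ work. We must also confirm that $b\neq c$ and that $a,b,c$ are distinct, which holds because the two inputs are disjoint apart from the identified vertex.
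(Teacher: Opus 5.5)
Your proof is correct and is essentially the paper's argument. Like the paper, you take decompositions of both inputs, pick bags containing the selected edges, and link them through a new bag $\{a,b,c\}$, which covers $bc$ and restores connectivity for $a$, $b$, and $c$ while keeping width at most $\max\{\tw(G),\tw(H),2\}$.
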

\begin{proof}
Use the notation of Theorem~\ref{thm:main}, with deleted edges $ab,ac$
and added edge $bc$. Take tree decompositions of the two inputs,
relabeling the shared vertex as $a$. Choose bags containing the
selected edges before deletion. Add a bag $\{a,b,c\}$ and connect it
to these two bags. The resulting bag graph is a tree. Every retained
edge remains covered, and the new bag covers $bc$. The bags containing
$a$ become connected through the new bag; the same condition holds
for $b,c$ and is unchanged for all other vertices. The maximum bag
size is at most the larger input bag size or three.
\end{proof}

\begin{theorem}[Separator certificate bound]\label{thm:separator}
Let $G$ be non-$k$-colorable, with $n\geq1$ vertices and treewidth at
most $w$. Set $Q=(w+1)(\lfloor\log_2 n\rfloor+1)$. Then
\begin{equation}\label{eq:separator-bound}
 N(G)\leq(k-1)Q.
\end{equation}
There is a quotient certificate with at most $k^{2Q}$ nonzero
monomial-edge terms. In particular, for fixed $k,w$, its degree is
$O(\log n)$ and its number of terms is polynomial in $n$.
\end{theorem}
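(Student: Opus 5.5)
We plan a recursive case analysis on a set of \emph{decided} vertices, producing a partition of unity in $R_V$ by indicator polynomials. For a vertex $v$ and a $k$th root of unity $\lambda$ (after passing to a splitting field $L$ and descending at the end by Lemma~\ref{lem:extension}), put
\[
 e_{v,\lambda}=\frac1k\sum_{j=0}^{k-1}\lambda^{-j}x_v^{j},
\]
so that $\sum_\lambda e_{v,\lambda}=1$, $x_ve_{v,\lambda}=\lambda e_{v,\lambda}$, and $\deg e_{v,\lambda}\le k-1$. For a set $S$ of vertices and an assignment $\alpha:S\to\mu_k$, the product $e_{S,\alpha}=\prod_{v\in S}e_{v,\alpha(v)}$ has degree at most $(k-1)|S|$ and at most $k^{|S|}$ monomials. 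If $\alpha$ assigns equal colors to the endpoints of an edge $uv\subseteq S$, then $q_{uv}e_{S,\alpha}=k\alpha(u)^{k-1}e_{S,\alpha}$, so $e_{S,\alpha}=\bigl(k\alpha(u)^{k-1}\bigr)^{-1}e_{S,\alpha}\,q_{uv}$ is an edge combination of degree at most $(k-1)|S|$.

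Next we choose the recursion. Take a tree decomposition of width $w$. A standard fact gives a bag $B$, with $|B|\le w+1$, such that every component of $G-B$ has at most $n/2$ vertices. Apply this recursively to each component $C$, using the restricted decomposition. The recursion depth is at most $\lfloor\log_2n\rfloor+1$, so every root-to-leaf chain $S=B_1\cup B_2\cup\cdots$ has $|S|\le Q$. We then prove, by induction from the leaves, the following claim. Let $C$ be a component arising in the recursion, let $S$ be the union of the separator bags on its ancestor chain, and let $\alpha$ be an assignment of $S$ that is proper on the edges inside $S$. Suppose $G[C\cup S]$ has no proper coloring extending $\alpha$, or more generally the whole graph does not. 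Then $e_{S,\alpha}$ is an edge combination of degree at most $(k-1)Q$. At the top level we have $S=\emptyset$ and $e_{\emptyset}=1$, which is the desired certificate.

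The inductive step splits on the next separator $B$ of $C$: we write $e_{S,\alpha}=\sum_\beta e_{S\cup B,\alpha\cup\beta}$. The terms with $\alpha\cup\beta$ improper on an edge are handled by the observation in the first paragraph. For a proper term, the colorings of the components of $C-B$ are independent given the colors on $S\cup B$, because every edge leaving such a component lands in $S\cup B$. Since $G$ is not colorable, some child component admits no extension, and we apply the induction hypothesis to that single child. A leaf is a component with empty remaining vertex set, so every extension fails on some edge inside $S$, and that case is already covered. Degrees stay at most $(k-1)|S|\le(k-1)Q$. For the term count, we expand the products fully. Each leaf product contributes at most $k^{|S|}$ monomials, so the number of leaves visited times $k^{Q}$ gives the bound $k^{2Q}$. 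The number of leaves is at most the number of assignments of the chain, which is at most $k^{Q}$, since we descend into only one child per branch.

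The main obstacle is making the independence step rigorous. We must show that the failure for the whole graph localizes to one child component for each fixed proper boundary assignment, given that $S$ contains all neighbors of $C$ outside $C$. We also have to verify carefully the identity $q_{uv}e_{S,\alpha}=k\lambda^{k-1}e_{S,\alpha}$ when $\alpha(u)=\alpha(v)=\lambda$. This holds because $e_{v,\lambda}$ acts as evaluation $x_v\mapsto\lambda$ under multiplication. We must also confirm that descent to $\F$ preserves the degree and the term bound; this requires care with the term count, and otherwise we would state that bound over $L$.
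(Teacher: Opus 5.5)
Your argument is essentially the paper's proof: balanced separators from the restricted tree decomposition, a decision tree that descends into a single non-extendable component, the indicator products $e_{S,\alpha}$ as a partition of unity over a splitting field with $q_{uv}e_{S,\alpha}=k\alpha(u)^{k-1}e_{S,\alpha}$ at the leaves, and descent by an $\F$-linear map $L\to\F$ fixing $1$ applied coefficientwise. That descent is legitimate because the $q_e$ and the reduction $x_v^k=1$ are defined over $\F$, and it can only shrink supports, so both the degree bound and the $k^{2Q}$ term bound survive.

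One correction is needed in the invariant. Drop ``or more generally the whole graph does not'': under that weaker hypothesis your leaf case breaks down, because a failure elsewhere in $G$ does not force an improper edge inside $S$. Also replace ``since $G$ is not colorable'' with the local reason the paper uses: if every child of $C$ had an extension of $\alpha\cup\beta$, these extensions would glue to an extension of $\alpha$ to $G[C\cup S]$.
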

\begin{proof}
Every induced subgraph on $m$ vertices has a separator of size at most
$w+1$ whose removal leaves components of order at most $m/2$. To see
this, restrict a width-$w$ decomposition to that subgraph and assign
each vertex's unit weight to one bag containing it. A weighted centroid
bag separates the decomposition tree into branches of weight at most
$m/2$. A vertex outside that bag has all of its bags in one branch.
Adjacent vertices outside the bag belong to the same branch. Each
remaining graph component therefore has at most $m/2$ vertices.

Consider a decision tree that, given any assignment of $k$ colors to
the vertices of $G$, finds a monochromatic edge by querying vertex
colors. Maintain a queried set $B$ with fixed colors and an active
unqueried set $U$ such that the coloring of $B$ has no proper extension
to $G[B\cup U]$. Initially $B=\varnothing$ and $U=V(G)$. Query a
balanced separator $S$ of $G[U]$. If the colors on $B\cup S$ already
exhibit a monochromatic edge, stop. Otherwise some component $C$ of
$G[U-S]$ has no extension compatible with those colors: if all
components extended, their colorings would combine to color
$G[B\cup U]$. Continue with $B\cup S$ and $C$.

The component is chosen from the graph and already queried colors,
without inspecting any unqueried color of the supplied assignment.
This defines a decision tree. The active order at least halves at
each stage, so every path uses at most $\lfloor\log_2 n\rfloor+1$
stages and at most $Q$ queries. Once no active vertices remain, the
invariant forces a monochromatic edge among the queried vertices.

Work in a splitting field $L$ of $x^k-1$, and write $\mu_k$ for its
$k$ distinct roots. For $c\in\mu_k$, the polynomial
\[
 I_c(x)=\frac1k\sum_{j=0}^{k-1}(x/c)^j
\]
is the indicator of $x=c$ on $\mu_k$. At each leaf $\ell$, multiply
the indicators for the queried vertex-color pairs to obtain $P_\ell$.
The leaf cylinders partition $\mu_k^n$, so $\sum_\ell P_\ell=1$.
If the edge returned at that leaf is $e_\ell$ and its common color
is $c_\ell$, then
\[
 P_\ell q_{e_\ell}=k c_\ell^{k-1}P_\ell.
\]
These are identities in $R_V\otimes_{\F}L$: evaluation on $\mu_k^n$
is injective on reduced polynomials, since the vertex polynomials
split into distinct roots. Consequently
\begin{equation}\label{eq:leaf-certificate}
 1=\sum_\ell\frac{P_\ell}{k c_\ell^{k-1}}q_{e_\ell}.
\end{equation}
Each multiplier has degree at most $(k-1)Q$ and at most $k^Q$ terms;
the tree has at most $k^Q$ leaves. Apply an $\F$-linear map
$L\to\F$ fixing $1$ coefficientwise, as in
Lemma~\ref{lem:extension}. This preserves the identity without
increasing degree or monomial support.
\end{proof}

The support bound also gives polynomial support in an ordinary
certificate. In the product of a reduced monomial and $q_{uv}$, only
the exponents of $x_u,x_v$ can need reduction, each at most once.
The telescoping argument of Lemma~\ref{lem:quotient} therefore adds
at most $2kS$ monomial-vertex terms to a quotient certificate with
$S$ monomial-edge terms.

\begin{corollary}[Joins alone]\label{cor:joins-only}
Every graph constructed solely by \HJ\ joins from graphs of
treewidth at most $w_0$ has treewidth at most $\max\{w_0,2\}$.
For fixed $k,w_0$, every non-$k$-colorable graph in this class has
coefficient degree $O(\log n)$ and a certificate with polynomially
many terms. In particular, for $k=3$ over $\F_2$, every graph
constructed solely by \HJ\ joins from $K_4$ satisfies
\begin{equation}\label{eq:k4-logarithmic}
 N(G)\leq8(\lfloor\log_2 n\rfloor+1).
\end{equation}
\end{corollary}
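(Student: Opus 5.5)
The plan is to combine Proposition~\ref{prop:join-width} with Theorem~\ref{thm:separator}.

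\textbf{Treewidth bound.} I will induct on the construction tree. A leaf graph has treewidth at most $w_0\le\max\{w_0,2\}$. At a join node, suppose both inputs already have treewidth at most $w:=\max\{w_0,2\}$. Proposition~\ref{prop:join-width} then bounds the output treewidth by $\max\{w,w,2\}=w$. The induction does not depend on the shape of the construction tree, so unbalanced join spines are covered automatically.

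\textbf{Degree and size.} For a non-$k$-colorable graph $G$ in the class with $n$ vertices, I apply Theorem~\ref{thm:separator} with this $w$. This gives $N(G)\le(k-1)(w+1)(\lfloor\log_2 n\rfloor+1)$, which is $O(\log n)$ for fixed $k,w_0$. The same theorem gives a quotient certificate with at most $k^{2Q}$ monomial-edge terms. Since $Q=(w+1)(\lfloor\log_2n\rfloor+1)$, we have
\[
k^{2Q}\le k^{2(w+1)}\,n^{2(w+1)\log_2 k},
\]
which is polynomial in $n$. If an ordinary certificate is wanted, the remark after the theorem adds at most $2kS$ vertex terms, so the count stays polynomial. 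I will also note that non-$k$-colorability of joined graphs is not an extra hypothesis: Theorem~\ref{thm:main} gives it whenever the leaves are non-$k$-colorable.

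\textbf{The $K_4$ case.} Here $k=3$ and $\tw(K_4)=3$, so $w=\max\{3,2\}=3$. Then $(k-1)(w+1)=2\cdot4=8$, and Theorem~\ref{thm:separator} gives \eqref{eq:k4-logarithmic}. Every such graph is non-$3$-colorable by Theorem~\ref{thm:main}, so the theorem applies.

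\textbf{Expected obstacle.} There is no real obstacle; the one point to check is that Proposition~\ref{prop:join-width} is applied with the same constant $w$ at every node. That holds because the constant $2$ is absorbed into $w$, so the bound never drifts upward along the construction.
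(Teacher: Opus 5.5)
Your proposal is correct and follows the paper's own argument. You induct through the construction with Proposition~\ref{prop:join-width}, with the constant $2$ absorbed into $w=\max\{w_0,2\}$, apply Theorem~\ref{thm:separator}, and use Theorem~\ref{thm:main} for non-$3$-colorability of the $K_4$-joins; your estimate $k^{2Q}\le k^{2(w+1)}n^{2(w+1)\log_2 k}$ and the value $(k-1)(w+1)=8$ for $\tw(K_4)=3$ just spell out what the paper leaves implicit.
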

\begin{proof}
Apply Proposition~\ref{prop:join-width} inductively, followed by
Theorem~\ref{thm:separator}. Joins of copies of $K_4$ are
non-$3$-colorable by Theorem~\ref{thm:main}.
\end{proof}

Here ``solely'' excludes additional vertex identifications and edge
additions between joins; the identification within a \HJ\ join is
part of that operation. The selected edges and the construction shape
are arbitrary. Reuse of earlier graphs is permitted, with two disjoint
copies made for each join. Thus the corollary is not restricted to
iteration along the newly created bridge.

Binary branching alone does not yield~\eqref{eq:k4-logarithmic}.
A binary spine with $L$ leaves can have height $L-1$, while its
$K_4$-join graph has $n=3L+1$ vertices. Corollary~\ref{cor:depth}
alone then permits a linear degree bound. The separator argument
supplies the stronger restriction even for these unbalanced spines.
If a joins-only sequence had $N(G_t)=1+3t$, it would require
\[
 |V(G_t)|\geq2^{(1+3t)/8-1}.
\]
In particular, joins alone cannot give degree $\Omega(n^\epsilon)$
for any fixed $\epsilon>0$ as $n$ tends to infinity. This does not
assert a constant degree bound, nor does it prohibit unbounded degree
on graphs of exponentially growing order.

\section{Controlled vertex identifications}
\label{sec:identifications}

Identifying nonadjacent vertices cannot increase certificate degree,
but it can increase treewidth. We first classify which single
identifications preserve the degree of $T_{4,4}$, then give an operation
that prevents degree loss after a self-join. All finite results in this
section use $k=3$ and $\F=\F_2$.

\subsection{A complete classification for the base graph}

Use the labeling of $T=T_{4,4}$ in Section~\ref{sec:seven-lower}.
The untouched diamonds in each input chain have indices $0,2,3$.
Their internal vertices form the sets
\[
 L=\{5,6,9,10,11,12\},\qquad
 R=\{18,19,21,22,23,24\}.
\]
For nonadjacent $u,v$, write $T/(u=v)$ for the graph obtained by
identifying them and suppressing duplicate edges.

\begin{theorem}[Single-identification classification]\label{thm:identification}
Among the $259$ unordered nonadjacent vertex pairs of $T$, the quotient
$T/(u=v)$ has degree seven if and only if one endpoint lies in $L$
and the other in $R$. Each of these $36$ quotients has $24$ vertices
and $41$ edges, is $4$-critical, and has treewidth exactly four.
The full degree distribution is
\begin{center}
\begin{tabular}{@{}rrr@{}}
\toprule
Degree & Vertex pairs & Input-pair automorphism orbits \\
\midrule
$1$ & $20$ & $8$ \\
$4$ & $203$ & $64$ \\
$7$ & $36$ & $6$ \\
\bottomrule
\end{tabular}
\end{center}
\end{theorem}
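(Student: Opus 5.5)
This is a finite statement about one explicit $25$-vertex graph, so the proof will combine structural arguments with computer checks, in the style of Lemma~\ref{lem:finite-seven}. First I enumerate the nonadjacent pairs: $T$ has $\binom{25}{2}=300$ pairs and $41$ edges, which leaves $259$ pairs. Each quotient $T/(u=v)$ has degree at most $N(T)=7$, because the identification substitutes one variable for another, as in Corollary~\ref{cor:depth}. The quotient also remains non-$3$-colorable. By Lemma~\ref{lem:congruence} its degree therefore lies in $\{1,4,7\}$. For each pair I will supply a witness from one of three kinds.

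\textbf{Upper witnesses.} For degree $1$ or $4$ I give a quotient certificate, checked by expansion in $R_V$. Many pairs can be handled in bulk before any search. Suppose the identification creates a $K_4$, or places both endpoints of a diamond's terminals into a triangle. Then a degree-one certificate exists. I will use the local identity~\eqref{eq:diamond-equality} together with $K_4$-type certificates. Suppose instead the identification kills the bridge's need. For example, it may join vertices within one chain so that a short closed chain forms, or a single chain becomes non-colorable by itself. Then a $D_r$-style certificate~\eqref{eq:chain-upper} of degree $4$ applies directly. Pairs with no such structural reason get a certificate from solving the degree-$4$ linear system.

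\textbf{Lower witnesses for degree-seven pairs.} For the $36$ pairs with one endpoint in $L$ and one in $R$, I need a dual functional as in~\eqref{eq:dual-seven}. The first thing I will try is to push the functional $\Lambda$ of Lemma~\ref{lem:finite-seven} through. This does not work directly, because identification is a quotient, and a dual functional on $T$ restricts to the quotient only if it is invariant under $x_u=x_v$. So I will compute a fresh functional for each quotient. Input-pair automorphisms reduce the work to $6$ orbit representatives. The grade argument following Lemma~\ref{lem:finite-seven} then extends each check to degree six. For degree-four pairs I also need a degree-one dual functional, which is a small computation.

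\textbf{Criticality and treewidth.} For criticality, I first need non-$3$-colorability, which is automatic. I then exhibit a $3$-coloring after each edge deletion, which is a finite check of $41$ colorings per quotient. Vertex deletions and the passage to $4$-colorings follow as in the proof of Theorem~\ref{thm:seven}. For treewidth, I will give an explicit decomposition of width $4$ and certify a lower bound of $4$. The natural lower-bound certificate is a bramble of order $5$, or alternatively a $K_5$-minor, checked mechanically.

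\textbf{Orbit counts.} I compute the automorphism group generated by swapping the two inputs together with the chain symmetries. I then count orbits to confirm the $8,64,6$ split.

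I expect the main obstacle to be the degree-seven lower bounds. They require dual functionals of support on the order of $10^5$ over $24$ vertices, found by large sparse linear algebra over $\F_2$. They also require a clean independent verifier, as in Appendix~\ref{app:reproduction}. The fallback would be a folding map like the one in Section~\ref{sec:folding} from a quotient onto $T$, but for identifications across $L$ and $R$ I see no such map.
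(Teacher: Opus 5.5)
Your plan is essentially the paper's computer-assisted proof. It uses automorphism-orbit representatives and an explicit upper certificate for each. Degree-one or degree-four dual functionals are made exact by Lemma~\ref{lem:congruence}, which already yields degree seven once degree four is excluded. So your grade-extension step is unnecessary; it would in any case need the fresh functional to vanish off grade zero. Criticality comes from $41$ edge-deletion colorings, and treewidth from a width-four decomposition plus a lower-bound witness.

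Two points to tighten. To confirm the orbit counts $8,64,6$ you must also check that your generated group is the full automorphism group of $T$; the paper enumerates all $128$ automorphisms by exhaustive search. For the treewidth lower bound, the paper's witness is a seven-vertex minor of minimum degree four. A treewidth-four graph need not have a $K_5$ minor, so rely on the bramble or on that degeneracy witness rather than on $K_5$.
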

\begin{proof}[Computer-assisted verification]
The ancillary data partition all nonedges of $T$ into $78$ classes,
with explicit automorphisms mapping each representative pair to every
member of its class. A standalone verifier reconstructs $T$ from its
definition, checks coverage of all $259$ pairs, and verifies each
permutation against the complete edge set. An exhaustive bijection
search, pruned only by necessary adjacency and vertex-color-refinement
invariants, enumerates all $128$ automorphisms and confirms that the
listed classes are full orbits.

For each representative the verifier constructs the quotient graph
and checks an explicit upper certificate on that graph. For outcomes
four and seven, a dual functional excludes degrees one and four,
respectively. Lemma~\ref{lem:congruence} makes these bounds exact.
Degree zero is impossible by Lemma~\ref{lem:quotient}. The six
representatives with degree seven are
\[
 (5,18),\ (5,21),\ (5,23),\ (9,21),\ (9,23),\ (11,23).
\]
Their orbits are exactly the $36$ pairs in $L\times R$. These are
orbits of pairs in the input graph; no count of isomorphism types of
the output graphs is asserted.

Each degree-seven quotient has a $189$-term upper certificate obtained
by substitution and a degree-four dual checked on all $814{,}875$
permitted monomial-edge multiples. Every residue class is included.
Thus $4{,}889{,}250$ checks establish the six degree-seven lower
bounds. The $64$ degree-four representatives require a further
$65{,}050$ dual checks, for $4{,}954{,}300$ in the classification.

For each degree-seven representative, the verifier checks proper
three-colorings after all $41$ edge deletions. There are no isolated
vertices, so deleting any vertex leaves a subgraph of an edge-deleted
graph. Recoloring one endpoint of a deleted edge with a fourth color
also gives a four-coloring of the full graph. Together with the upper
refutation, these facts prove $4$-criticality.

For the treewidth upper bound, identify the labels in a supplied
width-three decomposition of $T$ and add the merged vertex to any
bags needed for connectedness. The resulting width-four decomposition
is checked directly. For the lower bound, each representative has a
supplied minor with seven vertices, fourteen edges, and minimum
degree four. The verifier checks disjoint connected branch sets and
every minor edge. Every graph of treewidth at most three, and every
minor of such a graph, has a vertex of degree at most three. These
minors therefore prove treewidth at least four. All witnesses and
the verifier are included in \texttt{anc/controlled/}.
\end{proof}

In particular, let $Q=T/(5=18)$. Retain the smaller identified label,
remove label $18$, and renumber subsequent labels consecutively. Then
\[
 (|V(T)|,N(T),\tw(T))=(25,7,3),\qquad
 (|V(Q)|,N(Q),\tw(Q))=(24,7,4).
\]
Here $\tw(T)=3$ follows from its supplied decomposition and its
non-$3$-colorability, since graphs of treewidth at most two are
$3$-colorable. Treewidth is relevant because
Theorem~\ref{thm:separator} forces logarithmic degree whenever it is
bounded. The identification takes $Q$ outside the pure $K_4$-join
class while preserving degree. A uniform bound of four would still
give logarithmic degree; this single increase does not establish
the unbounded width required for linear degree.

\subsection{A compressed self-join without degree loss}

Write $\Gamma_G(v)$ for the neighbors of $v$. Adjacent vertices $a,b$
are \emph{true twins} if
$\Gamma_G(a)\setminus\{b\}=\Gamma_G(b)\setminus\{a\}$.
Form the \HJ\ self-join of $G$ along $ab$, identifying the two
copies of $a$. Choose $U\subseteq V(G)$ with $b\in U$ and $a\notin U$.
For every $v\notin U$ other than $a$, identify its left and right
copies; retain both copies of each vertex in $U$. Denote the result
by $C_U(G;a,b)$. All these identifications are legal: the only added
edge between the copies joins the two copies of $b$, which are not
identified.

\begin{proposition}\label{prop:controlled-join}
Let $k\geq3$, let the characteristic of $\F$ not divide $k$, and let
$G$ be non-$k$-colorable with a true-twin edge $ab$. Then
\begin{equation}\label{eq:controlled-bound}
 |V(C_U)|=|V(G)|+|U|,\qquad
 N(G)\leq N(C_U)\leq N(G)+k.
\end{equation}
Consequently $N(C_U)\in\{N(G),N(G)+k\}$. Criticality of $C_U$
is not assumed or implied.
\end{proposition}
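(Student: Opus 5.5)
The proof splits into the vertex count, the upper bound, and the lower bound; the lower bound is the real work.

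\emph{Vertex count.} The self-join $J$ of $G$ along $ab$ has $2|V(G)|-1$ vertices. Identifying the copies of every $v\notin U\cup\{a\}$ removes $|V(G)|-1-|U|$ vertices, which leaves $|V(G)|+|U|$. The edge set of $C_U$ is the image of the edge set of $J$, with duplicate edges suppressed.

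\emph{Upper bound.} Theorem~\ref{thm:main} with $G=H$ gives $N(J)\leq N(G)+k$. Corollary~\ref{cor:depth} says that identifying nonadjacent vertices cannot increase degree: substituting one variable for another sends edge constraints to edge constraints. Each identification we perform is of nonadjacent vertices, as the paper already observes. So $N(C_U)\leq N(J)\leq N(G)+k$, and in particular $C_U$ is non-$k$-colorable. Once both bounds hold, Lemma~\ref{lem:congruence} restricts $N(C_U)$ to the two values $N(G)$ and $N(G)+k$.

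\emph{Lower bound $N(G)\leq N(C_U)$.} I would imitate the degree-preserving substitution in the proof of Theorem~\ref{thm:main}, using the twin structure in place of a coloring. The plan is to define a ring map from the vertex quotient of $C_U$ to that of $G$, possibly over a splitting field $L$, with three properties: it sends each variable to a scalar multiple of a single variable, it sends each edge constraint of $C_U$ either to an edge constraint of $G$ or to $0$, and it respects the identifications. Then any degree-$d$ certificate for $C_U$ yields one for $G$ of degree at most $d$, and Lemma~\ref{lem:extension} descends it to $\F$.

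The first choice to try is: send every vertex of the left copy (and every identified vertex) to its own variable in $G$; send the right copy of each $u\in U\setminus\{b\}$ to $x_u$ as well; and send the right copy $b'$ of $b$ to a scalar multiple of $x_a$. Checking the edges:
\begin{itemize}
\item Left retained edges map to themselves.
\item The new edge $bb'$ maps to $q(x_b,\lambda x_a)$, which is the deleted constraint $q_{ab}$ when $\lambda=1$.
\item A right edge $b'w$ with $w\neq a$ maps to $q(x_a,x_w)$. This is an edge of $G$ because $\Gamma_G(b)\setminus\{a\}=\Gamma_G(a)\setminus\{b\}$; this is exactly where the twin hypothesis enters.
\item A right edge between other right vertices maps to the same edge of $G$.
\item The right edge $ab'$ was deleted in the join, so it imposes nothing.
\end{itemize}
Since identified vertices already map to a common variable, the map is well defined on $C_U$.

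The main obstacle is to confirm that no edge of $C_U$ maps to a diagonal constraint $q(x,x)=kx^{k-1}$, which is a unit and would break the argument. With $\lambda=1$ the right copy of $b$ and the vertex $a$ both go to $x_a$. The only edge between them, $ab'$, was deleted, but I must check that no surviving edge joins two vertices with the same image, including after the identifications. If such a collision appears, the fallback is to send $b'$ to $x_b$ and the right copy of $a$'s neighbourhood accordingly, using the twin symmetry $a\leftrightarrow b$, which is an automorphism of $G$. The hypothesis $k\geq3$ suggests this second route may require an auxiliary scalar $\lambda\neq1$, chosen among the $k$th roots of unity so that the relevant pairs receive distinct scalars, as in the folding of Section~\ref{sec:folding}. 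I would settle which of these two maps works by this case check before writing the details.
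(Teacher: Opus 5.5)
Your first map, with $\lambda=1$, is exactly the paper's homomorphism $\rho\colon C_U\to G$: it fixes the left vertices, sends $b'\mapsto a$, and sends every other right vertex to its original. Your vertex count, upper bound, and appeal to Lemma~\ref{lem:congruence} also agree with the paper, so once the check below is written down this is the paper's proof.

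The step you leave open is already settled by your own edge list. Every edge of $C_U$ is the image of a left edge, a right edge, or the bridge of the join. Your bullets show that each of these goes to an edge of $G$; for $b'w$ with $w\neq a$, the twin condition gives $aw\in E(G)$. Thus $\rho$ is a graph homomorphism. Edges of $G$ have distinct endpoints, so no constraint of $C_U$ can be sent to the unit $q(x,x)=kx^{k-1}$. Identified vertices already share an image, so the identifications cause no trouble.

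You should discard the fallbacks rather than leave them open, because both fail:
\begin{itemize}
\item Sending $b'$ to $x_b$ sends the bridge to $q(x_b,x_b)=kx_b^{k-1}$, which is a unit.
\item A scalar $\lambda\neq1$ sends the bridge to $q(x_b,\lambda x_a)$ and the right edges at $b'$ to $q(\lambda x_a,x_w)$. These are not edge constraints of $G$, so the transformed identity is not a certificate for $G$.
\end{itemize}
No splitting field is needed, and the hypothesis $k\geq3$ plays no role in this step.
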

\begin{proof}
The upper bound follows from Theorem~\ref{thm:main} and the
identifications, which do not increase degree. Define a graph
homomorphism $\rho:C_U\to G$ by fixing all left vertices, mapping
the right copy of $b$ to $a$, and mapping every other right vertex
$v$ to $v$. This is consistent with the identifications. A retained
right edge incident with $b$ maps to an edge incident with $a$ by
the true-twin condition. Every other retained edge maps to the same
edge of $G$, and the bridge maps to $ab$. Thus substitution
$x_v\mapsto x_{\rho(v)}$ sends a certificate for $C_U$ to one for
$G$ without increasing degree. This proves the lower bound.
The vertex count is immediate, and Lemma~\ref{lem:congruence}
leaves only the two displayed degree values.
\end{proof}

If all common external neighbors of $a,b$ are identified, the degree
cannot rise. In that case $G$ embeds in $C_U$ by mapping $a$ to the
right copy of $b$ and fixing every other vertex. The reverse
certificate bound then gives $N(C_U)=N(G)$. A possible increase
therefore requires at least one common neighbor to remain duplicated.

\subsection{A repeatable rule and its first plateau}

Call a true-twin edge eligible if its endpoints have degree at least
three. The graph $Q$ has the eligible edges
$(9,10),(11,12),(20,21),(22,23)$. Starting with $G_0=Q$, choose
an eligible edge $ab$, a disjoint eligible spare edge $uv$, and a
common external neighbor $w$ of $a,b$. Set
\begin{equation}\label{eq:four-rule}
 U=\{b,w,u,v\},\qquad G_{t+1}=C_U(G_t;a,b).
\end{equation}
Choose the lexicographically first eligible edge, the first disjoint
eligible spare edge, and the least common neighbor. Retain all left
labels and append the four right-copy labels in increasing order
of their original labels. These conventions make the rule deterministic.

\begin{proposition}\label{prop:four-rule}
The rule~\eqref{eq:four-rule} is available at every stage and satisfies
\begin{equation}\label{eq:four-rule-bounds}
 |V(G_t)|=24+4t,\qquad
 N(G_{t+1})\in\{N(G_t),N(G_t)+3\},\qquad
 7\leq N(G_t)\leq7+3t.
\end{equation}
\end{proposition}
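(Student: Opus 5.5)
The proof is an induction on $t$, and almost all of its content is the availability claim. The degree and order statements follow from Proposition~\ref{prop:controlled-join} as soon as the rule is known to apply at stage $t$.

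\textbf{Reduction to an availability invariant.} I would prove a stronger statement by induction. For every $t\geq0$, $G_t$ is non-$3$-colorable and has at least two disjoint eligible true-twin edges. Moreover, the lexicographically first eligible edge $ab$ has a common external neighbor. Given this invariant, $U=\{b,w,u,v\}$ satisfies $b\in U$ and $a\notin U$. Proposition~\ref{prop:controlled-join} with $k=3$ over $\F_2$ then gives:
\begin{itemize}
\item $|V(G_{t+1})|=|V(G_t)|+4$;
\item $N(G_t)\leq N(G_{t+1})\leq N(G_t)+3$, with only the two congruent values possible.
\end{itemize}
The upper part of that proposition, through Theorem~\ref{thm:main}, also keeps $G_{t+1}$ non-$3$-colorable. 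Starting from $|V(Q)|=24$ and $N(Q)=7$ (Theorem~\ref{thm:identification}), summing these relations gives $|V(G_t)|=24+4t$ and $7\leq N(G_t)\leq 7+3t$.

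\textbf{Base case.} For $G_0=Q$ the eligible edges $(9,10),(11,12),(20,21),(22,23)$ are listed in the text. I would check directly from the edge list of $T$, after the identification $5=18$, that these are true twins of degree at least three. They are the internal pairs of untouched diamonds, whose two terminals are their common external neighbors. Pairwise disjointness is immediate.

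\textbf{Inductive step, and the main obstacle.} The difficulty is showing that the rule regenerates eligible twin edges. My plan is to track the structure explicitly rather than rely on arbitrary graphs. I would show that the right copies of $u,v$, which are retained because $u,v\in U$, again form an eligible true-twin edge in $G_{t+1}$:
\begin{itemize}
\item The copies $u',v'$ are adjacent, since edge $uv$ is copied.
\item Every right neighbor of $u$ other than $v$ is either identified with its left copy or retained. The same holds for $v$, and twinhood transfers because the copy map $\rho$ acts identically on both neighborhoods.
\item The only new edge is the bridge between the two copies of $b$. It does not touch $u',v'$, because $uv$ is disjoint from $ab$ and, being a twin pair adjacent to each other, $u,v$ cannot equal $b$.
\end{itemize}
One care point is whether $w$ or $b$ neighbors $u$ or $v$. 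If so, the retained copy $w'$ or $b'$ is adjacent to both $u'$ and $v'$ symmetrically, so twinhood is preserved. Degrees do not decrease, so eligibility persists.

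I also need the left copy $uv$ to stay an eligible twin pair. Its left neighbors are unchanged except possibly $b$ or $w$, which gain symmetric copies. Hence $G_{t+1}$ contains at least the two disjoint eligible edges $uv$ and $u'v'$. The common-neighbor condition for them is inherited, since the original common neighbor or its copy survives adjacent to both.

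If this hand argument breaks on some coincidence, such as $w\in\{u,v\}$ or $u,v$ sharing a neighbor with $a$, I would fall back on two things. First, the deterministic lexicographic choices would let me check a few rounds explicitly. Second, I would isolate the invariant \emph{some eligible twin edge disjoint from the first one has a common external neighbor}, which the copy argument preserves.
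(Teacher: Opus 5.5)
\textbf{Gap in availability.} Your availability invariant is too weak, and this leaves two steps unproved.
\begin{itemize}
\item The invariant only says $G_t$ has \emph{some} two disjoint eligible true-twin edges. The rule, however, takes the lexicographically \emph{first} eligible edge $ab$ and needs a spare disjoint from it. Nothing in your argument stops that first edge in $G_{t+1}$ from meeting both $uv$ and $u'v'$. Your fallback invariant has the same defect, since the copy argument says nothing about the first eligible edge of $G_{t+1}$.
\item Proposition~\ref{prop:controlled-join} gives $|V(C_U)|=|V(G)|+|U|$, so the order formula $24+4t$ needs $|U|=4$, that is, $w\notin\{u,v\}$. You list this as a possible coincidence. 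The only remedy you offer is checking a few rounds explicitly, which cannot cover all $t$.
\end{itemize}

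\textbf{The missing idea.} Use the degree lower bound structurally. The induction carries $N(G_t)\geq7$. Adding vertices and edges preserves a certificate, and $N(K_4)=1$, so no $G_t$ contains a $K_4$ subgraph. Both missing facts follow.
\begin{itemize}
\item Suppose two distinct eligible twin edges share a vertex, say $ab$ and $bc$. The twin condition for $ab$ makes $c$ adjacent to $a$. Since $b$ has degree at least three, it has a neighbor $d\notin\{a,c\}$. The twin conditions for $ab$ and $bc$ make $d$ adjacent to $a$ and to $c$. This gives $K_4$ on $\{a,b,c,d\}$. Hence eligible twin edges are pairwise disjoint, and any second eligible edge is a disjoint spare for the first one.
\item Suppose $w=u$. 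Then $u$ is adjacent to $a$ and $b$, and the twin condition for $uv$ makes $v$ adjacent to $a$ and $b$ as well. This gives $K_4$ on $\{a,b,u,v\}$. So $w\notin\{u,v\}$ and $|U|=4$.
\end{itemize}

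With these two facts, your argument that the left and right copies of $uv$ remain disjoint eligible true-twin edges is correct and matches the paper's. Together they complete the induction.
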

\begin{proof}
Proposition~\ref{prop:controlled-join} prevents degree loss, so every
input has degree at least seven and cannot contain $K_4$, whose
degree is one. Distinct eligible true-twin edges are disjoint:
if two shared a vertex, their three endpoints would be mutually
adjacent true twins, and eligibility would supply a common fourth
neighbor, forming $K_4$. Thus, whenever two eligible edges exist,
the lexicographically selected edge has a disjoint spare.
If $w$ belonged to the spare pair, the two true-twin
conditions would put all six edges of $K_4$ on $a,b,u,v$. Hence the
four vertices in $U$ are distinct. Both left and right copies of
$uv$ remain disjoint true-twin edges, with degree at least three:
their common neighbors are either retained in the same copy or
identified together, and neither copy loses an edge at the selected
edge $ab$. They provide eligible disjoint pairs for the next round.
Induction proves availability and the order formula; the degree
bounds follow from Proposition~\ref{prop:controlled-join}.
\end{proof}

Each round uses one self-join and $|V(G_t)|-5$ pair identifications.
Through $t$ rounds there are $t$ joins and $2t^2+17t$ identifications
when the previous graph is reused for both inputs. This describes a
polynomial-size construction DAG; unfolding reuse into an expression
tree is a different size measure. The graph rule is computable in
polynomial time, but no bounded maximum degree or linear edge count
is asserted.

The first output has $28$ vertices and $51$ edges. A checked
$3{,}557$-term degree-seven certificate on a $25$-vertex subgraph
lifts to the output. Its homomorphism to $Q$ supplies the matching
lower bound, so $N(G_1)=7$. The subgraph is proper and non-$3$-colorable;
thus $G_1$ is not $4$-critical. This rule provides size control and
prevents degree loss, but does not force a rise even in its first
round. Its asymptotic degree growth is unknown.

\subsection{A necessary identification depth}

For a construction from $K_4$ using only joins and identifications,
let $i$ be the maximum number of pair-identification nodes on a
leaf-to-root path. This definition refers to the chosen construction
and also applies to a construction DAG.

\begin{corollary}\label{cor:identification-depth}
Over $\F_2$ for three-coloring, such a construction of a graph $G$
on $n$ vertices satisfies
\[
 \tw(G)\leq3+i,\qquad
 N(G)\leq2(i+4)(\lfloor\log_2 n\rfloor+1).
\]
In particular, linear degree requires identification depth
$\Omega(n/\log n)$.
\end{corollary}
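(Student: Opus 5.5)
The plan is to bound treewidth by induction over the construction and then apply Theorem~\ref{thm:separator} with $k=3$ and $w=3+i$. This gives $N(G)\leq 2(4+i)(\lfloor\log_2 n\rfloor+1)$, since $(k-1)(w+1)=2(i+4)$. So the whole statement reduces to the treewidth claim $\tw(G)\leq 3+i$. The final sentence then follows by rearranging. If $N(G)\geq cn$ for some constant $c>0$, then $2(i+4)(\log_2 n+1)\geq cn$, so $i=\Omega(n/\log n)$.

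For the treewidth claim I would prove a stronger invariant by induction on the construction DAG. For each node producing a graph $G_\nu$, let $i_\nu$ be the maximum number of identification nodes on a leaf-to-$\nu$ path. The invariant is $\tw(G_\nu)\leq 3+i_\nu$. Leaves are copies of $K_4$ with treewidth $3$ and $i_\nu=0$.

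At a join node, Proposition~\ref{prop:join-width} gives
\[
 \tw\leq\max\{\tw(G),\tw(H),2\}\leq 3+\max\{i_G,i_H\},
\]
and $i_\nu$ equals that maximum, so the invariant holds.

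At an identification node merging nonadjacent $u,v$, take an optimal decomposition of the input and replace $v$ by $u$ in every bag. The bags containing the merged vertex are then the union of two subtrees. That union may be disconnected, so add $u$ to every bag on the tree path joining the two subtrees. This is the same device used in the proof of Theorem~\ref{thm:identification}. Every edge stays covered, and each bag grows by at most one vertex, so $\tw$ increases by at most $1$ while $i_\nu$ increases by exactly $1$. Within a Haj\'os join, the identification of $a$ with $x$ is handled inside Proposition~\ref{prop:join-width} and does not count as an identification node. This matches the convention stated after Corollary~\ref{cor:joins-only}.

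Because the induction is over a DAG, reused graphs are fine: each node's bound depends only on its own inputs. The root then satisfies $\tw(G)\leq 3+i$.

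The remaining checks are the hypotheses of Theorem~\ref{thm:separator}. The graph $G$ must be non-$3$-colorable. Joins preserve this by Theorem~\ref{thm:main}, and identifying nonadjacent vertices cannot create a $3$-coloring, since a coloring of the quotient lifts back to the input. We also need $n\geq 1$, which is trivial. Finally, the theorem's bound with $w=3+i$ is exactly $(k-1)Q=2(i+4)(\lfloor\log_2 n\rfloor+1)$.

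The main point needing care is the path-augmentation step for identifications. I would verify that adding $u$ to the bags along the connecting path restores the connected-subtree property for $u$ and leaves the property intact for every other vertex. Both are immediate, since no other vertex's bag membership changes. I would also verify that the bag-size increase is at most one per identification, even when the two subtrees already overlap.
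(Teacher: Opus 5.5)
Your proposal is correct and follows the paper's proof. You induct through the construction, using Proposition~\ref{prop:join-width} at joins and the path-augmentation argument at each identification (which the paper also cites from Gurski--Weishaupt) to gain at most one unit of treewidth, then apply Theorem~\ref{thm:separator} with $k=3$ and $w=3+i$. Your explicit check that identifying nonadjacent vertices preserves non-$3$-colorability, which Theorem~\ref{thm:separator} requires, is a detail the paper leaves implicit.
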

\begin{proof}
A single identification increases treewidth by at most one; see
\cite[Theorem~6]{GW}. For completeness, replace the two labels by
their common image in all bags and add that label along the path
connecting their former bag subtrees. At most one vertex is added
to a bag. Combine this fact with Proposition~\ref{prop:join-width}
by induction through the construction. The degree bound then follows
from Theorem~\ref{thm:separator}.
\end{proof}

Together with Corollary~\ref{cor:depth}, linear degree therefore
requires join depth $\Omega(n)$ and identification depth
$\Omega(n/\log n)$. The two maximum depths need not occur on the
same path. Neither depth bound alone implies an exponential lower
bound on the number of construction nodes.

\section{Open problems and scope}
\label{sec:outlook}

Theorem~\ref{thm:seven} realizes an increase from four to seven, but
does not determine which selected edges force an increase. A first
remaining finite target is an example with a verified increase from
seven to ten. More generally, one seeks a rule forcing sufficiently
frequent increases while controlling the order of the output.

Corollary~\ref{cor:joins-only} rules out linear degree on linear
order using joins alone. The next target is an explicit family of
$4$-critical graphs constructed by specified joins and nonadjacent
vertex identifications with
\[
 |V(G_t)|=O(t),\qquad N(G_t)=\Omega(t).
\]
This is the optimal order of degree growth: every reduced monomial
has degree at most $(k-1)|V(G)|$, so $N(G)\leq(k-1)|V(G)|$.
Theorem~\ref{thm:separator} also makes treewidth
$\Omega(n/\log n)$ necessary for linear degree. Merely increasing
treewidth from three to four is insufficient.

Proposition~\ref{prop:four-rule} supplies linear order and prevents
degree loss, but gives no frequency of strict increases. Its first
output has degree seven and is not critical. Maintaining criticality,
bounded maximum degree, or linear edge count would each require a
further argument. Linear-degree constructions are already known
by~\cite{LN}; the target is a construction controlled by these graph
operations and a proof explaining its degree growth.
No claim of minimum graph order or certificate support is made.

\paragraph{Relation to $\mathrm P$ versus $\mathrm{NP}$.}
The family $T_{r,s}$ has certificates with a linear number of terms and
fixed degree seven. For fixed $d$, there are at most $\binom{n+d}{d}$
candidate multiplier monomials, so the degree-$d$ certificate problem
over $\F_2$ is a linear system of polynomial size. The finite lower
bounds exclude certificates of degree at most six; they give no
superpolynomial running-time bound. A Nullstellensatz degree lower
bound does not automatically give a degree lower bound for the stronger
polynomial calculus, nor does the Boolean degree-size tradeoff apply
without justification to roots-of-unity systems; see~\cite[Section~2.1]{LN}.
A lower bound for a restricted proof system also leaves other algorithms
unrestricted. These results do not settle $\mathrm P$ versus $\mathrm{NP}$.

\section{Computational verification}
\label{sec:verification}

The general bounds have algebraic proofs. Computation supplies the
finite certificates and checks the composition formulas. The ancillary
files contain the graph data, certificates, and Python programs using
exact arithmetic over $\F_2$. The sparse-polynomial checker is separate
from the search and composition code. It checks dual obstructions
against every reduced multiplier monomial within the stated degree bound.

The recorded checks comprise: 52 evaluations of the general local
identity for $2\leq k\leq12$ over permitted prime fields; explicit
certificates for the examples above; five expansions into ordinary
polynomial identities, including the vertex multipliers; and all 36
pairs of oriented-edge orbits in Moser--Moser joins. Each of the latter
has a degree-$4$ certificate, and its substitution through $\F_4$ and
coefficient descent to $\F_2$ was checked. These tests found no
degree-$7$ Moser--Moser join. The degree-seven construction in
Section~\ref{sec:degree7} instead joins two $13$-vertex chains.

As implementation controls, all 1,024 labeled graphs on five vertices
were tested at degree bounds $1$ and $4$. The resulting 132 refutations
and 1,916 dual obstructions were separately verified, with the search
outcomes agreeing with exhaustive three-coloring on this finite set.
The checker also rejected five deliberately corrupted certificates.

For $T_{4,4}$, two arithmetic implementations check the $189$-term upper
certificate and the dual functional described in
Section~\ref{sec:seven-lower}. Each checks all $947{,}141$ multiples
through degree four. The verifier also checks a three-coloring after
each of the $41$ edge deletions, and tests the construction and folding
formulas on five family members. The proof for arbitrary $r,s$ is given
in Section~\ref{sec:folding}.

For Theorem~\ref{thm:identification}, the standalone auditor checks
all $259$ pairs through $78$ full automorphism orbits, the upper
certificates, and $4{,}954{,}300$ dual constraints. It also checks
$246$ edge-deletion colorings and matching treewidth witnesses for
the six degree-seven representatives. Its arithmetic uses base-three
integer codes independently of the search implementation.

A separate auditor reconstructs ten rounds of
Proposition~\ref{prop:four-rule}, checks the deterministic choices,
the surviving spare twin pairs, and all $1{,}314$ edge images in
the homomorphisms. It checks the first-round degree-seven certificate
and, in a separate finite probe, degree-at-most-four certificates for
$17$ specified second identifications of $Q$. This probe is not an
exhaustive classification of quotients of $Q$. A third auditor checks
$11$ width-three decompositions and two expanded separator certificates.
The asymptotic statements in Section~\ref{sec:joins-only} rest on
their written proofs, not on these finite tests.

\paragraph{Acknowledgment.}
The author acknowledges the use of OpenAI GPT-6 through Codex as a tool
in the research, computational work, and preparation of this manuscript.
Responsibility for the results and conclusions rests with the author.

\clearpage
\appendix
\section{Explicit finite witnesses}
\label{app:witnesses}

All calculations in this appendix are over $\F_2$, modulo $x_i^3=1$.
Write $q_{ij}=x_i^2+x_ix_j+x_j^2$. Linear monomials are written as $x_i$.
For higher degrees, the word $\mathtt{i_1\cdots i_t}$ denotes
$x_{i_1}\cdots x_{i_t}$; for example, $\mathtt{0018}=x_0^2x_1x_8$.
Every vertex label here is a single digit. Addition is in $\F_2$.

\begingroup
\small
\noindent\begin{minipage}{\linewidth}
\subsection{A degree-one certificate for \texorpdfstring{$K_4$}{K4}}
The following multipliers satisfy $\sum_{ij\in E(K_4)}h_{ij}q_{ij}=1$.
Unlisted edge multipliers are zero. Direct ordinary expansion gives
$\sum h_{ij}q_{ij}=x_1^3$, which is $1$ in the quotient.
\begin{center}
\small
\begin{tabular}{@{}cl@{}}
\toprule
$ij$ & $h_{ij}$ \\
\midrule
$01$ & $x_{0}$ \\
$02$ & $x_{0}$ \\
$03$ & $x_{1}+x_{2}$ \\
$12$ & $x_{1}$ \\
$13$ & $x_{0}+x_{2}$ \\
$23$ & $x_{0}+x_{1}$ \\
\bottomrule
\end{tabular}
\end{center}
\end{minipage}\par

\subsection{A degree-one obstruction for \texorpdfstring{$M$}{M}}
Define an $\F_2$-linear functional $\Lambda$ on reduced monomials of
degree at most three. Set $\Lambda(1)=1$ and set $\Lambda$ to one
on exactly the following cubic monomials, and to zero on all remaining
monomials in this degree range:
\[\begin{gathered}
\mathtt{003},\;\mathtt{006},\;\mathtt{012},\;\mathtt{022},\;\mathtt{023},\;\mathtt{045},\;\mathtt{055},\;\mathtt{056} \\
\mathtt{113},\;\mathtt{115},\;\mathtt{122},\;\mathtt{123},\;\mathtt{125},\;\mathtt{126},\;\mathtt{135},\;\mathtt{136} \\
\mathtt{144},\;\mathtt{145},\;\mathtt{146},\;\mathtt{226},\;\mathtt{233},\;\mathtt{336},\;\mathtt{344},\;\mathtt{355} \\
\mathtt{366},\;\mathtt{446},\;\mathtt{455},\;\mathtt{456},\;\mathtt{566}
\end{gathered}\]
For every $ij\in E(M)$, direct substitution in this list gives
$\Lambda(q_{ij})=0$ and $\Lambda(x_tq_{ij})=0$ for
$t=0,\ldots,6$, reducing exponents modulo three before applying the
functional. These are $88$ checks: $11$ constant multipliers and
$77$ linear multipliers. Hence a degree-one identity would imply
$1=\Lambda(1)=0$. A degree-four certificate for $M$ is obtained
directly by applying~\eqref{eq:composition} to the two copies of the
$K_4$ certificate above.

\noindent\begin{minipage}{\linewidth}
\subsection{A degree-four certificate for \texorpdfstring{$C$}{C}}
For the edge set in~\eqref{eq:C}, the following multipliers satisfy
$\sum h_{ij}q_{ij}=1$ after reduction modulo $x_i^3=1$.
Unlisted edge multipliers are zero. Each monomial has degree one or four.
\begin{center}
\small
\begin{tabular}{@{}cl@{}}
\toprule
$ij$ & $h_{ij}$ \\
\midrule
$03$ & $\mathtt{0018}+\mathtt{0019}+\mathtt{0028}+\mathtt{0029}$ \\
$05$ & $x_{0}+\mathtt{0011}+x_{1}$ \\
$06$ & $\mathtt{0014}+\mathtt{0015}+\mathtt{0114}+\mathtt{0115}+x_{4}+x_{5}$ \\
$08$ & $x_{0}+\mathtt{0019}+\mathtt{0119}+x_{2}+x_{7}$ \\
$09$ & $x_{0}+\mathtt{0018}+\mathtt{0118}+x_{2}+x_{7}$ \\
$12$ & $\mathtt{0018}+\mathtt{0019}$ \\
$13$ & $\mathtt{0028}+\mathtt{0029}+x_{8}+x_{9}$ \\
$14$ & $x_{0}+\mathtt{0011}+x_{1}$ \\
$23$ & $\mathtt{0018}+\mathtt{0019}+x_{8}+x_{9}$ \\
$27$ & $x_{8}+x_{9}$ \\
$45$ & $\mathtt{0014}+\mathtt{0114}+x_{4}$ \\
$46$ & $x_{0}+\mathtt{0011}+\mathtt{0015}+\mathtt{0115}+x_{1}+x_{5}$ \\
$56$ & $x_{0}+\mathtt{0011}+\mathtt{0014}+\mathtt{0114}+x_{1}+x_{4}$ \\
$78$ & $x_{0}+x_{2}+x_{7}$ \\
$79$ & $x_{0}+x_{2}+x_{7}$ \\
$89$ & $\mathtt{0018}+\mathtt{0118}$ \\
\bottomrule
\end{tabular}
\end{center}
\end{minipage}\par

\endgroup

\section{Reproduction}
\label{app:reproduction}

With Python 3.10 or newer, run the original examples from \texttt{anc/}:
\begin{verbatim}
python hajos_experiments.py
\end{verbatim}
The script regenerates and checks the witnesses, then writes
\texttt{hajos\_results.json}. In the JSON format, graph edges are listed
lexicographically. Each refutation term pairs an edge index with a list
of variable indices representing a monomial; terms are summed modulo two.
A dual obstruction lists the monomials on which its functional is one
and assigns zero to all others.

\subsection{Degree-seven artifacts}

The self-contained directory \texttt{anc/degree7/} contains the base graph,
upper and lower certificates, two exact checkers, the family construction,
and a hash manifest. With Python 3.10 or newer, run from that directory:
\begin{verbatim}
python -X utf8 verify_all.py
\end{verbatim}
The verifier uses only the Python standard library. It checks the base
graph and certificates, the diamond identity, the input degree, the
edge-deletion colorings, five family instances, and the file hashes.
Any failed check raises an error.

The central certificate artifacts have the following SHA-256 digests.
Concatenate the two lines for each file; the complete manifest is
\texttt{anc/degree7/SHA256SUMS.json}.
\begin{center}\small
\begin{tabular}{@{}ll@{}}
\toprule
File & SHA-256 \\
\midrule
\texttt{base\_upper.json} & \texttt{8fe1561eb7f1718e925948bd2d13274b} \\
 & \texttt{4b41b6283a2d7cf347ea30101c6e1bc1} \\
\texttt{base\_lower.json} & \texttt{95d3f0667521f53af9cd80dde8696b98} \\
 & \texttt{9d60f46ec21e6e35f2a21bb36e9ad7d0} \\
\bottomrule
\end{tabular}
\end{center}

The lower functional is specified by its nonzero support; all unlisted
monomials have value zero. The archive also contains an earlier
$283$-term upper certificate in \texttt{base\_upper\_original.json}.

\subsection{Identifications and structural bounds}

The directory \texttt{anc/controlled/} is a self-contained snapshot
of the three additional auditors and all their required graph,
certificate, coloring, and decomposition data. Run from that directory:
\begin{verbatim}
python -X utf8 audit_join_barrier.py
python -X utf8 audit_identifications.py
python -X utf8 audit_controlled_join.py
\end{verbatim}
Only the Python standard library is required. The auditors reconstruct
the named base graph and quotients, verify exact identities and duals,
and check the combinatorial witnesses. The compressed construction
is specified by~\eqref{eq:four-rule} and the labeling conventions
in Section~\ref{sec:identifications}; the ten stored rounds are
checked against that rule. \texttt{SOURCE\_PROVENANCE.json} records
the source hashes. The archive-wide manifest is \texttt{anc/SHA256SUMS}.


\begin{thebibliography}{9}

\bibitem{DLMRS}
J.~A. De Loera, S.~Margulies, M.~Pernpeintner, E.~Riedl, D.~Rolnick,
G.~Spencer, D.~Stasi, and J.~Swenson,
\emph{Gr\"obner Bases and Nullstellens\"atze for Graph-Coloring Ideals},
arXiv:1410.6806, 2014.
\url{https://arxiv.org/abs/1410.6806}.

\bibitem{LLO}
B.~Li, B.~Lowenstein, and M.~Omar,
\emph{Low Degree Nullstellensatz Certificates for 3-Colorability},
Electronic Journal of Combinatorics \textbf{23}(1) (2016), Paper P1.6.
Preprint: \url{https://arxiv.org/abs/1503.04680}.

\bibitem{RT}
J.~Romero and L.~Tun\c{c}el,
\emph{Graphs with Large Girth and Chromatic Number are Hard for
Nullstellensatz}, manuscript revised February 12, 2024.
\url{https://www.math.uwaterloo.ca/~ltuncel/publications/2212.05365.pdf}.
Earlier version: arXiv:2212.05365, 2022.

\bibitem{LN}
M.~Lauria and J.~Nordstr\"om,
\emph{Graph Colouring is Hard for Algorithms Based on Hilbert's
Nullstellensatz and Gr\"obner Bases},
32nd Computational Complexity Conference (CCC 2017),
LIPIcs \textbf{79} (2017), 2:1--2:20.
\url{https://doi.org/10.4230/LIPIcs.CCC.2017.2}.
Full version: \url{https://arxiv.org/abs/2306.00125}.

\bibitem{GW}
F.~Gurski and R.~Weishaupt,
\emph{The Behavior of Tree-Width and Path-Width Under Graph Operations
and Graph Transformations},
Algorithms \textbf{18}(7) (2025), 386.
\url{https://doi.org/10.3390/a18070386}.

\end{thebibliography}
\end{document}